\numberwithin{equation}{section}
\newtheorem{teo}{Theorem}[section]
\newtheorem{prop}[teo]{Proposition}
\newtheorem{defi}[teo]{Definition}
\newtheorem{lema}[teo]{Lemma}
\newtheorem{maintheorem}{Theorem}
\numberwithin{equation}{section}
\title[On mostly expanding diffeomorphisms.]{On mostly expanding diffeomorphisms.}
\date{\today}
\author[Martin Andersson]{Martin Andersson}
\address{Martin Andersson,
Departamento de Matem\'atica,
Universidade Federal Fluminense.}
\email{nilsmartin@id.uff.br}
\author[C. H. V\'asquez]{Carlos H. V\'asquez}
\address{Carlos H. V\'asquez, Instituto de Matem\'atica,
Pontificia Universidad Cat\'olica de Valpara\'{\i}so, Blanco Viel 596,
Cerro Bar\'on, Valpara\'{\i}so-Chile.} \email{carlos.vasquez@ucv.cl}
\subjclass{Primary: 37CD30, 37A25, 37D25.}
\keywords{Partial hyperbolicity, Lyapunov exponents, SRB measures, stable ergodicity}
\begin{document}

\begin{abstract}
In this work we study the  class of mostly expanding partially hyperbolic diffeomorphisms. We prove that such class is $C^r$-open, $r>1$,  among the partially hyperbolic diffeomorphisms and we prove that the mostly expanding condition guarantee the existence of physical measures and provide more information about the statistics of the system.   Ma\~n\'e's classical derived-from-Anosov diffeomorphism on $\mathbb{T}^3$ belongs to this set.
\end{abstract}

\maketitle

\section{Introduction}\label{sec:intro}


Physical measures may be thought of as capturing the asymptotic statistical behaviour of large sets of orbits under a dynamical system. There is a strong and well known connection between the existence of physical measures and abundance, in some proper sense, of non-zero Lyapunov exponents. This is particularly true in the setting of dissipative partially hyperbolic diffeomorphisms, where non-zero central Lyapunov exponents are a crucial ingredient in nearly all of the known results (see  \cite{T2005} and \cite{B2015} for two exceptions).  During the last 15 years,  considerable effort has been done towards an understanding of the statistical behavior of partially hyperbolic diffeomorphisms with non-vanishing central Lyapunov exponents.

In the case studied by Dolgopyat \cite{D2000} and Bonatti and Viana \cite{BV2000}, they assume abundance of negative Lyapunov exponents along the center direction (mostly contracting condition). The mostly contracting condition was later shown to be $C^2$ robust, with most of its members satisfying a strong kind of statistical stability: all physical measures persist and vary continuously with small deterministic perturbations of the dynamics \cite{A2010}. Recently Dolgopyat, Viana and Yang \cite{VY2013, DVY2016} have given a detailed explanation of how bifurcations occur and given an exhaustive set of examples. They have also proved a form of continuity of the basins of physical measures.

The present work deals with the analogous but considerably harder case of diffeomorphisms whose central direction exhibits abundance of positive Lyapunov exponents. We introduce a new notion of  {\em mostly expanding} diffeomorphims (different to the one introduced in \cite{ABV2000}) and we prove they  constitute a $C^2$ open set. Moreover we show that  a mostly expanding diffeomorphims exhibit finite number of physical measures and and provide more information about the statistics of the system. Particularly, we study a notion of  ergodic stability in a non-conservative setting for such diffemorphisms.

\subsection{Mostly expanding diffeomorphims}

Let $M$ be a  closed Riemannian manifold. 
We denote by $\|\cdot\|$ the norm obtained from the Riemannian structure 
and by ${\rm Leb\:}$ the Lebesgue measure on $M$. If $V$, $W$ are normed linear spaces and $A\::\:V\to W$ is a linear map, we define
$$\|A\|=\sup\{\|Av\|/\|v\|, v\in V\setminus\{0\}\},$$
and
$${\rm m}(A)=\inf\{\|Av\|/\|v\|, v\in V\setminus\{0\}\}.$$


A diffeomorphism $f\colon M\rightarrow M$  is {\em partially hyperbolic} if there exists a  continuous $Df$-invariant splitting of $TM$, $$TM=E^s\oplus E^c\oplus E^u,$$  and there exist constants $C\geq 0$ and 

$$0<\lambda_1\leq \mu_1<\lambda_2\leq\mu_2 <\lambda_3\leq\mu_3$$ 
with $\mu_1<1<\lambda_3$ such that for all $x\in M$ and every $n\geq 1$ we have:

\begin{equation}\label{PH1}
C^{-1}\lambda_1^n\leq {\rm m}\:(Df^n(x)|E^s(x))\leq\|Df^n(x)|E^s(x)\|\leq C\mu_1^n,
\end{equation}
\begin{equation}\label{PH2}
C^{-1}\lambda_2^n\leq {\rm m}\:(Df^n(x)|E^c(x))\leq\|Df^n(x)|E^c(x)\|\leq C\mu_2^n, 
\end{equation}
\begin{equation}\label{PH3}
C^{-1}\lambda_3^n\leq {\rm m}\:(Df^n(x)|E^u(x))\leq\|Df^n(x)|E^u(x)\|\leq C\mu_3^n. 
\end{equation}
We always assume that $\dim E^\sigma\geq1$, $\sigma=s,c,u$ unless stated otherwise. We also point out that the set of $C^r$-partially hyperbolic diffeomorphisms, $r\geq 1$, is  $C^r$-open \cite[Corollary 2.17]{HP2006}. For partially hyperbolic diffeomorphisms, it is a well-known fact that there are foliations $\mathcal{F}^\sigma$ tangent to the distributions $E^\sigma$
for $\sigma=s,u$ \cite{HPS1977}. The leaf of $\mathcal{F}^{\sigma}$
containing $x$ will be called $W^{\sigma}(x)$, for $\sigma=s,u$.

An $f$-invariant probability measure $\mu$ is a {\em Gibbs $u$-state} or {\em $u$-measure} if the conditional measures of $\mu$ with respect to the partition into local strong-unstable manifolds are absolutely continuous with respect to Lebesgue measure along the corresponding local strong-unstable manifold. Section~\ref{sec:preliminaries} will be devoted to provides more properties of Gibbs $u$-states. 

\begin{defi}\label{defi:mostlyexpanding} A partially hyperbolic diffeomorphism $f:M\to M$ $f$ with  $Df$-invariant splitting $TM=E^s\oplus E^c\oplus E^u$  is {\bf mostly expanding along the central direction} if $f$ has positive central Lyapunov exponents almost everywhere with respect to every Gibbs $u$-state for $f$. 
\end{defi}

There are several notions of asymptotic expansion along the center direction in the literature and we will explain briefly the relation between mostly expanding  and the other similar conditions introduced.

\begin{defi}\label{defi:strongmostlyexpanding} A partially hyperbolic diffeomorphism $f:M \to M$ is {\bf  strongly mostly expanding along the central direction} if
\begin{equation}\label{eq:mostlyexpanding}
\lambda^c(x)=\liminf_{n\to+\infty}\frac{1}{n}\log{\rm m}(Df^n|E^{c}_x)>0
\end{equation}
for a positive Lebesgue measure set of points $x$ in every disk $D^u$ contained in a strong unstable local manifold. 
\end{defi}

The  notion above  is a mimic  of mostly contracting notion introduced in \cite{BV2000} and it is not the same as in \cite{ABV2000}, where the term mostly expanding was coined. 

We prove that if $f$ is mostly expanding in a strong sense, then it is mostly expanding   (see Proposition~\ref{prop:defequivalente1}). We do not know if the reciprocal  is true.

As we mentioned above, Alves, Bonatti and Viana \cite{ABV2000} use a different definition of mostly expanding. Their definition is more generalthan ours. In particular, they allow the strong unstable direction $E^u$ to be trivial, working with splitting of type $E^s\oplus E^{cu}$.  In our setting, we write $E^{cu}=E^{c}\oplus E^{u}$ and state  their notion of mostly expanding with a different name:

\begin{defi}\label{defi: NUEcond} We say that $f$ is {\bf non uniformly expanding} along the  center-unstable direction (for short $f$ satisfies the {\bf NUE-condition}) if there exists $c_0>0$ and $H \subset M$ of positive Lebesgue measure, such that 
\begin{equation}\label{eq:NUE}
\limsup_{n\to\infty}\frac{1}{n} \sum^{n-1}_{j=0}
\log\|Df^{-1}|E^{cu}_{f^j(x)}\|\leq -c_0<0.
\end{equation}
holds for every $x \in H$. 
\end{defi}

Recently, condition (\ref{eq:NUE}) was weakened by Alves, Dias, Luzzatto and Pinheiro in \cite{ADLP2014}.  

\begin{defi}\label{defi: wNUEcond} A partially hyperbolic diffeomorphism $f$ as above is {\bf weakly non-uniformly expanding} along the center-unstable direction (or it satisfies the {\bf wNUE-condition} for short), if there exists $c_0>0$ and $H \subset M$ of positive Lebesgue measure, such that 
\begin{equation}\label{eq:wNUE}
\liminf_{n\to\infty}\frac{1}{n} \sum^{n-1}_{j=0}
\log\|Df^{-1}|E^{cu}_{f^j(x)}\|\leq -c_0<0.
\end{equation}
\end{defi}

We remark that the $\liminf$ in  \eqref{eq:wNUE} implies that the growth only needs to be verified on a sub sequence of iterates, in contrast to the $\limsup$ in \eqref{eq:NUE}, where the condition needs to be verified for all sufficiently large times.

Our first result reveals the motivation of this work and the reason to introduce a new notion of mostly expanding: Properties \eqref{eq:NUE} and \eqref{eq:wNUE}  are not robust. 

\begin{maintheorem}\label{mteo:A}
Satisfying the NUE-condition (or wNUE-condition) on a set $H\subseteq \mathbb{T}^3$ with full Lebesgue measure is not a robust property among the set of partially hyperbolic $C^r$-diffeomorphisms on $\mathbb{T}^3$, $r>1$. 
\end{maintheorem}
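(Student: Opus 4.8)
The plan is to reduce both conditions to a statement about the center Lyapunov exponent, and then to exhibit a perturbation that forces a negative center exponent on a set of positive Lebesgue measure. Since $E^u$ is uniformly expanded, along any orbit the quantity $\log\|Dg^{-1}|E^{cu}\|$ is, up to bounded distortion, controlled by the reciprocal of the center expansion: if the empirical measures of $x$ converge to an ergodic measure $\mu$ with $\lambda^c(\mu)\le 0$, then $\liminf_{n}\frac1n\sum_{j=0}^{n-1}\log\|Dg^{-1}|E^{cu}_{g^j x}\|\ge -\lambda^c(\mu)\ge 0$. Consequently, if $g$ possesses a positive Lebesgue measure set of points whose forward center exponent is negative, then for each such $x$ both the $\limsup$ in \eqref{eq:NUE} and the $\liminf$ in \eqref{eq:wNUE} are nonnegative, so neither the NUE- nor the wNUE-condition can hold on a full measure set. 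The whole argument therefore consists in producing such a set arbitrarily $C^r$-close to a diffeomorphism that \emph{does} satisfy the conditions on all of $\mathbb{T}^3$.

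For the base point of the perturbation I would take the linear Anosov automorphism $f$ of $\mathbb{T}^3$ with eigenvalues $0<\lambda_s<1<\lambda_c<\lambda_u$, regarded as partially hyperbolic with one-dimensional center equal to the weak unstable direction (one may equally start from a Ma\~n\'e derived-from-Anosov map that is still mostly expanding). For $f$ one has $\|Df^{-1}|E^{cu}\|\equiv \lambda_c^{-1}<1$, so \eqref{eq:NUE} and \eqref{eq:wNUE} hold at every point with $c_0=\log\lambda_c$; in particular $f$ satisfies the NUE- and wNUE-conditions on the full measure set $H=\mathbb{T}^3$.

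The core of the proof is to construct, for every $\varepsilon>0$, a $C^r$ diffeomorphism $g$ with $d_{C^r}(f,g)<\varepsilon$, still partially hyperbolic (domination is $C^1$-robust, so the splitting $E^s\oplus E^c\oplus E^u$ persists), which admits a Gibbs $u$-state $\nu$ with $\lambda^c(\nu)<0$ whose ergodic basin has positive Lebesgue measure. I would obtain $g$ by a Ma\~n\'e-type, derived-from-Anosov deformation supported in a small neighborhood of a fixed point, pushing the center multiplier below $1$ and thereby creating a region of center contraction; the coexistence, arbitrarily close to $f$, of Gibbs $u$-states with center exponents of opposite signs is precisely the bifurcation phenomenon analysed by Dolgopyat, Viana and Yang \cite{VY2013, DVY2016}. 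The theory of Gibbs $u$-states recalled in Section~\ref{sec:preliminaries}, in particular the equidistribution of iterated unstable disks, guarantees that $\nu$ is physical, so that its basin indeed carries positive Lebesgue measure, which is exactly what the reduction in the first paragraph requires.

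On the basin of $\nu$ the Birkhoff averages of $\log\|Dg^{-1}|E^{cu}\|$ are eventually bounded below by $-\lambda^c(\nu)>0>-c_0$, so both the $\limsup$ and the $\liminf$ strictly exceed $-c_0$ on a set of positive Lebesgue measure; hence $g$ satisfies neither the NUE- nor the wNUE-condition on any full measure subset of $\mathbb{T}^3$. As $g$ can be taken $C^r$-arbitrarily close to $f$, the property fails to be robust, proving the theorem. The main obstacle is the construction of the third paragraph: because $E^u$ is uniformly expanded the map can have no sink, so the negative center exponent cannot be produced by a naive attracting orbit and must instead be carried by a genuine Gibbs $u$-state; the delicate point is to guarantee, while staying $C^r$-close to an NUE example and preserving partial hyperbolicity, that this negative-center Gibbs $u$-state is physical with a basin of positive measure, for which a careful control of how iterated unstable disks accumulate on the center-contracting region is essential.
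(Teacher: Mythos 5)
Your reduction in the first paragraph is fine (non-positive center exponents on a positive Lebesgue measure set already rule out both \eqref{eq:NUE} and \eqref{eq:wNUE}), but the construction on which everything rests in your third paragraph cannot be carried out at either of the base points you propose. If $f$ is the linear Anosov map with $E^c$ the weak unstable direction, then $E^{cu}=E^c\oplus E^u$ is \emph{uniformly} expanding, and uniform expansion of a dominated subbundle is a $C^1$-open property; hence every diffeomorphism $C^1$-close to $f$ has all center Lyapunov exponents of all invariant measures bounded below by a positive constant. A Ma\~n\'e-type derived-from-Anosov deformation, which pushes the center multiplier at a fixed point through $1$, is necessarily a $C^1$-large perturbation: it can be made $C^0$-small, but never with $d_{C^r}(f,g)<\varepsilon$ for arbitrary $\varepsilon$, so no Gibbs $u$-state $\nu$ with $\lambda^c(\nu)<0$ exists near $f$ (and the Dolgopyat--Viana--Yang bifurcation phenomenon you invoke does not occur in this setting). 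The alternative base point you mention fares no better: if you start from a Ma\~n\'e DA map that is mostly expanding, then Theorem~\ref{mteo:B} of this very paper says that every $C^r$-nearby diffeomorphism is still mostly expanding, so none of them carries a Gibbs $u$-state (in particular no physical measure) with negative or zero center exponent; combining Theorem~\ref{mteo:C} with the remark following Lemma~\ref{liminf of average}, the NUE-condition on a full measure set is in fact \emph{robust} at such maps. In short, both of your candidate base points lie in the interior of the set of diffeomorphisms you are trying to exit, which is why the ``main obstacle'' you flag at the end is not a technical difficulty but an impossibility.

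The paper's proof avoids this by basing the example at a map which satisfies NUE almost everywhere but is \emph{not} mostly expanding, i.e.\ at the boundary of the mostly expanding class: the Dolgopyat--Hu--Pesin diffeomorphism $g$ of $[0,1]\times\mathbb{T}^2$ (glued into $\mathbb{T}^3$), which is volume preserving, ergodic, with positive center exponents Lebesgue-almost everywhere, yet leaves the boundary tori invariant with neutral center dynamics there, so that $\delta_0\times\mu$ is a Gibbs $u$-state with zero center exponent. The perturbation $f_\epsilon$ then simply squeezes $g$ into $[0,1-\epsilon)\times\mathbb{T}^2$ and inserts the product map $I\times A$ on the slab $[1-\epsilon,1)\times\mathbb{T}^2$: on that slab, of measure $\epsilon$, the center exponent is identically zero, so neither \eqref{eq:NUE} nor \eqref{eq:wNUE} can hold there, while $f_\epsilon\to g$ in the $C^r$ topology as $\epsilon\to 0$. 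Note that zero exponents on a positive measure set suffice; there is no need to produce negative exponents, let alone a physical measure with negative center exponent, and it is precisely this softer requirement that makes the paper's construction work where your plan cannot.
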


The statement above holds also if we replace `full measure' by `positive measure'. In fact, in  Section~\ref{sec:DHP Blocks}), we exhibits examples where  each property in the statement above is not robust. The examples are inspired by a construction due to  Dolgopyat, Hu and Pesin \cite[Appendix B]{BP1999}. They  provide an example of a non-uniformly hyperbolic volume preserving diffeomorphism on $\mathbb{T}^3$ with countably many ergodic components.

Even though our notion is more restrictive than NUE-condition or wNUE-condition (see Section~\ref{sec:demteosCD} for details), to be mostly expanding is a robust property  so mostly expanding diffeomorphisms are a good setting to looking for robust statistical properties.

\begin{maintheorem}\label{mteo:B}
The class of mostly expanding partially hyperbolic diffeomorphisms constitutes a $C^r$-open subset of ${\rm Diff}^r(M)$, $r>1$.
\end{maintheorem}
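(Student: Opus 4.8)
The plan is to reduce the mostly expanding condition to a single uniform, manifestly open estimate involving the integral of one continuous function against Gibbs $u$-states, and then to exploit the upper semicontinuous dependence of Gibbs $u$-states on the dynamics. Since partial hyperbolicity is already $C^r$-open \cite[Corollary 2.17]{HP2006} and the splitting $E^s\oplus E^c\oplus E^u$ varies continuously with $f$, I may freely restrict attention to $g$ in a $C^r$-neighbourhood of $f$ on which all the objects below are defined and depend continuously on $g$.

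First I would record an integral reformulation of the mostly expanding condition. For an ergodic Gibbs $u$-state $\mu$, the \emph{smallest} central Lyapunov exponent equals $\lim_{n}\frac1n\log{\rm m}(Df^n|E^c_x)$ for $\mu$-a.e.\ $x$; since the sequence $n\mapsto\int\log{\rm m}(Df^n|E^c)\,d\mu$ is superadditive (because ${\rm m}(AB)\geq{\rm m}(A)\,{\rm m}(B)$ together with $f$-invariance of $\mu$) and, by \eqref{PH2}, the integrands $\frac1n\log{\rm m}(Df^n|E^c)$ are uniformly bounded, the superadditive ergodic theorem gives
\begin{equation*}
c_f(\mu):=\sup_{n\geq1}\tfrac1n\int\log{\rm m}(Df^n|E^c)\,d\mu=\lim_{n}\tfrac1n\int\log{\rm m}(Df^n|E^c)\,d\mu,
\end{equation*}
which for ergodic $\mu$ is exactly the smallest central exponent. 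Using that the ergodic components of a Gibbs $u$-state are again Gibbs $u$-states (Section~\ref{sec:preliminaries}), one sees that $f$ is mostly expanding if and only if $c_f(\mu)>0$ for every Gibbs $u$-state $\mu$.

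The crux is to upgrade this to a uniform statement with one fixed iterate $N$. The set $\mathcal G(f)$ of Gibbs $u$-states is weak$^*$-compact, and $c_f$, being a supremum of the weak$^*$-continuous maps $\mu\mapsto\frac1n\int\log{\rm m}(Df^n|E^c)\,d\mu$, is lower semicontinuous; hence it attains a positive minimum on $\mathcal G(f)$, say $c_f\geq 2c_0>0$. For each $\mu$ there is then some $n$ with $\frac1n\int\log{\rm m}(Df^n|E^c)\,d\mu>c_0$, so the open sets $U_n=\{\mu:\frac1n\int\log{\rm m}(Df^n|E^c)\,d\mu>c_0\}$ cover $\mathcal G(f)$. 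Extracting a finite subcover $U_{n_1},\dots,U_{n_k}$ and setting $N=\operatorname{lcm}(n_1,\dots,n_k)$, superadditivity ($a_{mn}\geq m\,a_n$) yields $\frac1N\int\log{\rm m}(Df^N|E^c)\,d\mu>c_0$ for \emph{every} Gibbs $u$-state $\mu$ of $f$; this is the uniform characterization I am after.

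Finally I would argue by contradiction. If $f$ were not interior to the mostly expanding set, I pick $g_k\to f$ in $C^r$ with each $g_k$ not mostly expanding; by the reformulation there is an ergodic Gibbs $u$-state $\mu_k$ for $g_k$ with $c_{g_k}(\mu_k)\leq0$, whence $\frac1N\int\log{\rm m}(Dg_k^N|E^c_{g_k})\,d\mu_k\leq0$ for the fixed $N$ above. Passing to a weak$^*$-limit $\mu_k\to\mu$, the upper semicontinuity of Gibbs $u$-states (Section~\ref{sec:preliminaries}) makes $\mu$ a Gibbs $u$-state for $f$; since $\log{\rm m}(Dg_k^N|E^c_{g_k})\to\log{\rm m}(Df^N|E^c)$ uniformly (continuity of the splitting and of $g\mapsto Dg^N$), combining uniform convergence of the integrands with weak$^*$ convergence of the measures gives $\frac1N\int\log{\rm m}(Df^N|E^c)\,d\mu\leq0$, contradicting the uniform estimate. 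I expect the main obstacle to be precisely this passage from the pointwise, measure-dependent positivity to a uniform bound with a common $N$ and constant $c_0$; the compactness-plus-superadditivity argument and the upper semicontinuity of $\mathcal G(f)$ are the two ingredients that make the limiting step close.
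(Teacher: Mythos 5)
Your proof is correct, and its skeleton matches the paper's: reformulate the mostly expanding property as positivity of $\Lambda^c(f,\mu)=\sup_n\frac1n L_n(f,\mu)$ over all Gibbs $u$-states (this is the paper's Lemma~\ref{le:equivpositivo} combined with Lemma~\ref{le:Lambdalim}, both resting on superadditivity and the ergodic decomposition into Gibbs $u$-states), then argue by contradiction with a sequence of bad Gibbs $u$-states $\mu_k$ for $g_k\to f$ converging weak$^*$ to a Gibbs $u$-state of $f$ (closedness of the set of pairs $(g,\mu)$), and transfer finite-time positivity through the limit via continuity of $L_n$. Where you genuinely diverge is in how that transfer is organized. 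The paper proves lower semicontinuity of $\Lambda^c$ on the space $\mathcal{S}$ of pairs, choosing for each pair and each $\epsilon$ an iterate $n_0$ adapted to the limit point; you instead first extract, by compactness of $\mathcal{G}^u(f)$, a finite subcover and the least-common-multiple/superadditivity trick, a single iterate $N$ and a constant $c_0$ with $\frac1N L_N(f,\mu)>c_0$ for \emph{every} Gibbs $u$-state of $f$, and then pass to the limit only for that fixed $N$. Your uniform-$N$ step is precisely the paper's Lemma~\ref{le:Nunif}, which the paper proves in Section~\ref{sec:demteosCD} and uses only for Theorem~\ref{mteo:C}; so your route obtains, as a byproduct of openness, the uniform estimate needed later for the existence of physical measures, at the price of invoking compactness of $\mathcal{G}^u(f)$, which the paper's semicontinuity argument does not need. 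One further remark: the uniform $N$ is not actually necessary even within your own scheme, because $c_{g_k}(\mu_k)\leq 0$ forces $\frac1n L_n(g_k,\mu_k)\leq 0$ for \emph{every} $n$, so passing to the limit for each fixed $n$ separately already gives $\Lambda^c(f,\mu)=\sup_n\frac1n L_n(f,\mu)\leq 0$, contradicting Lemma~\ref{le:equivpositivo} directly; this shortcut is in effect what the paper's lower-semicontinuity proposition packages.
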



\subsection{Mostly expanding condition and existence of physical measures}
Physical measures may be thought of as capturing the asymptotic statistical behavior of large sets of orbits under a dynamical system. Recall that if $\mu$ is an $f$-invariant measure,  the {\em basin} of $\mu$ is the set

$${\mathcal B}(\mu)=\{z\in M\::\: \lim_{n\to\infty}\frac{1}{n}\sum_{k=0}^{n-1} \varphi(f^{k}(z))=\int_M \varphi\:d\:\mu, \mbox{ for all }\varphi\in C^0(M,\mathbb{R})\}.$$

It is well known that if $\mu$ is ergodic, then ${\mathcal B}(\mu)$ has full $\mu$-measure. The measure $\mu$  is {\em physical} or {\em SRB measure}, if 
${\rm Leb\:}({\mathcal B}(\mu))>0$. 

There is a strong and well known connection between the existence of physical measures and abundance, in some proper sense, of non-zero Lyapunov exponents. This is particularly true in the setting of dissipative partially hyperbolic diffeomorphisms, where the asymptotic expansion (or contraction) on the central subbundle is a crucial ingredient in nearly all of the known results. For instance, Alves, Bonatti and Viana \cite{ABV2000} as well as Alves, Dias, Luzzatto and Pinheiro  \cite{ADLP2014} showed that if $f$ is a $C^r$- partially hyperbolic diffeomorphism satisfying the NUE-condition (resp. wNUE-condition) for $H=M$, then it  exhibits finitely many physical measures and the union of their basins covers a full Lebesgue measure subset of $M$. Nevertheless, the techniques and methods that  used by  Alves, Dias, Luzzatto and Pinheiro in \cite{ADLP2014} to deal with this weaker assumptions are completely different from those used in Alves Bonatti and Viana in \cite{ABV2000}.

In \cite{ABV2000}, Alves, Bonatti and Viana ask if is it possible to conclude the existence of physical measures if the non-uniform expansion condition \eqref{eq:NUE} is replaced by
condition \eqref{eq:mostlyexpanding}. Our  next theorem gives essentially a positive answer to such question, although instead of requiring that the condition hold on a positive Lebesgue measure set, we require it to hold  on a positive leaf volume subset of every unstable disk.

\begin{maintheorem}\label{mteo:C}
If $f$ is a  mostly expanding partially hyperbolic diffeomorphism, then $f$ has a finite number of physical measures whose basins together cover Lebesgue almost every point in $M$.
\end{maintheorem}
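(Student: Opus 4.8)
The plan is to establish existence of finitely many physical measures for a mostly expanding diffeomorphism by exploiting the theory of Gibbs $u$-states together with the positivity of central Lyapunov exponents. Let me sketch the key steps.

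First, I would recall the foundational facts about Gibbs $u$-states (which the paper says are developed in Section~\ref{sec:preliminaries}): for any partially hyperbolic diffeomorphism, Gibbs $u$-states exist, the set of $u$-states is nonempty, compact and convex in the weak-* topology, its extreme points are the ergodic $u$-states, and every ergodic component of a $u$-state is again a $u$-state. Crucially, for any point $x$ and any unstable disk $D^u$, the forward Cesàro averages of iterates of Lebesgue-along-$D^u$ accumulate on $u$-states, so Lebesgue-almost every point of $M$ has its empirical measures accumulating on the convex hull of the $u$-states.

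Second, I would use the mostly expanding hypothesis. By Definition~\ref{defi:mostlyexpanding}, every Gibbs $u$-state has all central Lyapunov exponents positive almost everywhere. Combined with positivity along $E^u$ (which is automatic from partial hyperbolicity) and the fact that the center-unstable bundle $E^{cu}=E^c\oplus E^u$ therefore has only positive exponents, each ergodic $u$-state is in fact a hyperbolic measure with non-negative-to-positive exponents in the center-unstable direction and contraction in $E^s$. The key consequence is that for such a measure the unstable manifolds are precisely the full center-unstable manifolds, so absolute continuity of conditionals along $\mathcal{F}^u$ upgrades, via the entropy/Pesin machinery (Ledrappier--Young), to absolute continuity along the full unstable (center-unstable) foliation; in other words, each ergodic $u$-state becomes an SRB measure in the Pesin sense, hence physical.

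Third, I would prove finiteness. The natural approach is to show the ergodic $u$-states are isolated: because each ergodic $u$-state has uniformly positive central exponents on a definite-measure set, one can use a measurable partition argument (or the semicontinuity of the number of ergodic components coming from the positivity of exponents, as in the arguments of Bonatti--Viana and Dolgopyat) to bound their number. Concretely, I expect to argue that each ergodic $u$-state carries an open set of points whose basins it captures, and that these basins are (Lebesgue-mod-zero) disjoint, so compactness of $M$ forces finitely many such measures $\mu_1,\dots,\mu_k$.

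Finally, I would assemble the basin-covering conclusion. Since Lebesgue-almost every point's empirical measures accumulate on $u$-states, and every $u$-state decomposes into the finitely many ergodic physical measures $\mu_i$, an ergodicity/Hopf-type argument along unstable and stable foliations shows that for Lebesgue-a.e.\ $x$ the averages actually converge to a single $\mu_i$, so $\mathrm{Leb}(M\setminus\bigcup_i \mathcal B(\mu_i))=0$. The hard part will be the finiteness step: the mostly expanding hypothesis only gives positivity of exponents \emph{with respect to $u$-states}, not a uniform lower bound over the whole manifold, so a priori there could be infinitely many ergodic $u$-states accumulating with degenerating expansion (this is exactly the pathology that the Dolgopyat--Hu--Pesin example realizes). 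I expect to close this gap by exploiting that the central exponent is given by integrating the continuous function $\log\mathrm{m}(Df|E^c)$ against each ergodic $u$-state together with upper semicontinuity of the $u$-state space, extracting a uniform positive lower bound on a subsequence and deriving a contradiction with accumulation of distinct ergodic components.
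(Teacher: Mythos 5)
Your proposal breaks down at its central step. The claim that for an ergodic Gibbs $u$-state with positive central Lyapunov exponents ``absolute continuity of conditionals along $\mathcal{F}^u$ upgrades, via the entropy/Pesin machinery, to absolute continuity along the full center-unstable manifolds, hence each ergodic $u$-state becomes an SRB measure'' is false, and the paper itself exhibits the counterexample (end of Section~\ref{sec:preliminaries}): take $f=A_1\times A_2$ on $\mathbb{T}^2\times\mathbb{T}^2$ with $E^u=E^u_1$, $E^c=E^u_2$, and $\mu=\mu_1\times\mu_2$ where $\mu_1$ is a Gibbs $u$-state of $A_1$ and $\mu_2$ is a Dirac mass on a periodic orbit of $A_2$. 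Here $E^c$ is uniformly expanding, so $f$ is mostly expanding and $\mu$ is an ergodic Gibbs $u$-state with positive central exponents; yet $\mu$ is not a Gibbs $cu$-state and is not physical (its basin lies in a zero Lebesgue measure set). In Ledrappier--Young terms, being a $u$-state controls only the partial entropy along $E^u$; the central exponents can fail to contribute to the entropy, which is exactly what blocks the upgrade you invoke. The same example kills your finiteness step: every periodic orbit of $A_2$ produces a distinct ergodic Gibbs $u$-state, so mostly expanding systems can have \emph{infinitely many} ergodic $u$-states, and no compactness or semicontinuity argument can bound their number --- the finite objects are the Gibbs $cu$-states, not the $u$-states.

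The paper's proof is structured precisely to avoid this trap. It never tries to show that $u$-states are physical; instead it uses compactness of $\mathcal{G}^u(f)$, continuity of $\mu\mapsto\int\log{\rm m}(Df^{n}|E^c)\,d\mu$, and superadditivity to find a single $n_0$ with $\int\log{\rm m}(Df^{n_0}|E^c)\,d\mu>0$ for \emph{every} $u$-state (Lemma~\ref{le:Nunif}); then, using the full-measure set of points whose empirical measures accumulate only on $u$-states (Proposition~\ref{prop:uM6}) together with an elementary $\limsup$-averaging lemma, it shows that $g=f^{n_0}$ satisfies the wNUE-condition on a set $H$ whose saturate $H\cup f^{-1}(H)\cup\dots\cup f^{-(n_0-1)}(H)$ has full Lebesgue measure (Lemma~\ref{wNUE on H}); finally it invokes Proposition~\ref{prop:ADLP2014} (Alves--Dias--Luzzatto--Pinheiro) applied to $g$, which is the external input that actually produces finitely many ergodic Gibbs $cu$-states that are physical with basins covering $H$ mod zero, and pushes them back to $f$ by averaging along the orbit $\nu\mapsto\frac{1}{n_0}(\nu+f_*\nu+\dots+f_*^{n_0-1}\nu)$. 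Your closing idea of integrating $\log{\rm m}(Df|E^c)$ against $u$-states and using semicontinuity is indeed the germ of Lemma~\ref{le:Nunif}, but in your argument it is aimed at the wrong target (counting $u$-states rather than establishing a uniform expansion estimate to feed into the $cu$-state existence theorem), so the gap is not repairable within your outline.
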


We point out, however, that there is no possibility of having the phenomenon of intermingled basins of attraction in the mostly expanding case (see Lemma~\ref{le:openbasin} and also \cite{UV2015}).

\subsection{Mostly expanding and stable ergodicity}

We now consider the question of uniqueness of physical measures, not just for $f$, but also for its small perturbations. This is related to the stable ergodicity problem in a dissipative setting studied in \cite{BDP2002, BDPP2008, A2010} for  mostly contracting diffeomorphisms  and by \cite{V2009} in the case of mostly expanding diffeomorphisms. Our examples in section 5 show that, in contrast to the mostly contracting case, where it was shown in \cite{A2010} that uniqueness of the physical measure implies robust uniqueness of the physical measure, having the NUE or wNUE satisfied on a set of full Lebesgue measure does not imply that uniqueness of the physical measure is a robust property. We refer the reader to \cite{P2010, PC2010} and the references therein for a more exhaustive information. 

Recall that a  foliation is minimal if its leaves are dense. The strongly stable foliation $\mathcal{F}^{s}(f)$ of a partially hyperbolic diffeomorphism $f:M\to M$ is $C^r$-robustly minimal if there exists a $C^r$-neighborhood $\mathcal{U}$ of $f$ such that $\mathcal{F}^{s}(g)$ is minimal for every $g\in\mathcal{U}$. 

\begin{maintheorem}\label{mteo:D}
Assume that  $f$ is a mostly expanding partially hyperbolic $C^r$-diffeomorphism, $r>1$.  Suppose that the strongly stable (resp. unstable) foliation $\mathcal{F}^{s}(f)$ $f$ is $C^r$-robustly minimal. Then any $C^r$ diffeomorphism $g$ close enough to $f$ in the $C^r$ topology has a unique physical measure $\mu_g$ whose  basin $\mathcal{B}(\mu_g)$ has full volume in whole manifold $M$. 
\end{maintheorem}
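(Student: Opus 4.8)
The plan is to reduce the statement to a single diffeomorphism and then extract uniqueness from minimality. First I would note that the hypotheses are stable: by Theorem~\ref{mteo:B} the mostly expanding condition is $C^r$-open, so every $g$ in a sufficiently small $C^r$-neighborhood of $f$ is again mostly expanding, while $C^r$-robust minimality of $\mathcal{F}^s(f)$ (resp.\ $\mathcal{F}^u(f)$) guarantees that $\mathcal{F}^s(g)$ (resp.\ $\mathcal{F}^u(g)$) is minimal for all such $g$. By Theorem~\ref{mteo:C}, $g$ then possesses finitely many physical measures $\mu_1,\dots,\mu_k$ whose basins cover ${\rm Leb}$-almost every point of $M$; since distinct ergodic physical measures have disjoint basins (any $\varphi\in C^0(M,\mathbb{R})$ separating $\mu_i$ from $\mu_j$ prevents a point from lying in both), it suffices to prove $k=1$, for then Theorem~\ref{mteo:C} forces the unique basin to have full volume.

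The central observation in the stable case is that each basin is saturated by strong stable leaves. Indeed, if $y\in W^s(x)$ then $d(g^n(x),g^n(y))\to 0$, so for every $\varphi\in C^0(M,\mathbb{R})$ the Birkhoff averages of $\varphi$ along the orbits of $x$ and $y$ differ by the Ces\`aro average of a null sequence; hence $x\in\mathcal{B}(\mu_i)$ if and only if $y\in\mathcal{B}(\mu_i)$. Thus each $\mathcal{B}(\mu_i)$ is a genuinely (not merely mod zero) $\mathcal{F}^s$-saturated set of positive Lebesgue measure. By Lemma~\ref{le:openbasin} there is no intermingling, so for each $i$ there is an open set $U_i$ with ${\rm Leb}(U_i\triangle\mathcal{B}(\mu_i))=0$, and the $U_i$ may be taken pairwise disjoint.

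Suppose now, for contradiction, that $k\ge 2$. The key technical step is the claim that \emph{an $\mathcal{F}^s$-saturated Borel set $B$ with ${\rm Leb}(B)>0$ meets every nonempty open set $U$ in positive Lebesgue measure, provided $\mathcal{F}^s$ is minimal}. Granting this, apply it to $B=\mathcal{B}(\mu_1)$ and $U=U_2$: disjointness of basins gives $\mathcal{B}(\mu_1)\cap U_2\subseteq U_2\setminus\mathcal{B}(\mu_2)$, a null set, contradicting ${\rm Leb}(\mathcal{B}(\mu_1)\cap U_2)>0$; hence $k=1$. To prove the claim I would use absolute continuity of $\mathcal{F}^s$: at a density point $p\in B$, Fubini along the leaves yields a positive transverse-measure family $A$ of leaves $W^s(\xi)\subseteq B$ (by saturation); minimality makes each such leaf dense, hence it enters $U$, and a measurable-selection/covering argument extracts a positive-measure subfamily $A'\subseteq A$ together with a single stable holonomy carrying $A'$ into a transversal inside $U$. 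Bounded positive Jacobian of stable holonomy makes the image of positive transverse measure, and re-saturating inside $U$ gives ${\rm Leb}(B\cap U)>0$. This is the main obstacle, since the holonomy must be made uniform over a positive-measure family of leaves that enter $U$ at a priori different places; note that a softer statement will not do, because an individual dense leaf meets $U_2$ only in a set of ambient measure zero, so the thickening via absolute continuity is essential.

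Finally, the unstable case is handled dually. When $\mathcal{F}^u(g)$ is minimal one uses that the physical measures of a mostly expanding diffeomorphism are ergodic Gibbs $u$-states, whose supports are $\mathcal{F}^u$-saturated and closed; minimality then forces each such support to equal $M$, and the argument that a minimal strong unstable foliation admits a unique Gibbs $u$-state—paralleling the mostly contracting theory of \cite{BV2000, A2010}—again yields $k=1$ and a unique physical measure with full-volume basin.
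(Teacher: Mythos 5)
Your reduction (Theorem~\ref{mteo:B} to make the mostly expanding condition persist, robust minimality to make the foliation of every perturbation $g$ minimal, Theorem~\ref{mteo:C} for finiteness, then uniqueness) has the same skeleton as the paper, but your mechanism for uniqueness differs. The paper's route is much lighter: minimality of \emph{either} strong foliation forces topological transitivity of $g$ (expand an unstable disk, resp.\ argue with $g^{-1}$), and then Lemma~\ref{le:unique} finishes both cases at once --- two basins are $g$-invariant and, by Lemma~\ref{le:openbasin}, of full Lebesgue measure in open sets $\mathcal{A}_\mu$, $\mathcal{A}_\nu$; transitivity gives a nonempty open intersection $g^n(\mathcal{A}_\mu)\cap\mathcal{A}_\nu$ in which both basins have full measure, so they meet and $\mu=\nu$. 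Your stable-case argument ($\mathcal{F}^s$-saturation of basins, a density point in a transversal, an absolutely continuous holonomy along a dense stable leaf into a foliation box inside $U$) is a correct classical alternative; it buys independence from the transitivity remark at the cost of invoking absolute continuity of global stable holonomies. But note that it is intrinsically one-sided: basins are defined by \emph{forward} Birkhoff averages, so they are not $\mathcal{F}^u$-saturated, and this argument cannot be dualized.

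The genuine gap is the unstable case. You reduce it to the claim that a minimal strong unstable foliation admits a unique Gibbs $u$-state, ``paralleling the mostly contracting theory.'' That claim is not proved, is not in the paper, and the analogy breaks exactly where mostly expanding differs from mostly contracting: in the mostly contracting world every ergodic Gibbs $u$-state is a physical measure, whereas for mostly expanding diffeomorphisms an ergodic Gibbs $u$-state with positive central exponents need not be physical --- the paper's own example at the end of Section~\ref{sec:preliminaries} (the product of two linear Anosov maps, with $\mu=\mu_1\times\delta_{\mathrm{orbit}}$) is precisely such a measure. So uniqueness of the physical measure cannot be routed through uniqueness of $u$-states this way; indeed uniqueness of Gibbs $u$-states under minimality of $\mathcal{F}^u$ is a substantially stronger assertion than Theorem~\ref{mteo:D} itself. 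The fix is already in your toolkit: each physical measure $\mu_i$ of $g$ is a Gibbs $u$-state, hence $\operatorname{supp}\mu_i$ is closed and consists of entire unstable leaves, so minimality of $\mathcal{F}^u(g)$ forces $\operatorname{supp}\mu_i=M$; but the proof of Lemma~\ref{le:openbasin} produces an open set $\mathcal{A}_i$ which is a neighborhood of $\operatorname{supp}\mu_i$ and in which $\mathcal{B}(\mu_i)$ has full Lebesgue measure, so $\mathcal{A}_i=M$ and $\mathcal{B}(\mu_i)$ has full volume. Two disjoint full-volume basins cannot coexist, hence $k=1$. Alternatively, simply observe (as the paper does) that minimality of either strong foliation implies transitivity and quote Lemma~\ref{le:unique}.
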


The same result above holds if we replace the hypothesis of strongly stable (resp. unstable) foliation by robust transitivity. Conditions under which one of the strong foliations of a partially hyperbolic diffeomorphism is  robustly minimal was provides by Pujals and Sambarino \cite{PS2006} and Bonatti, Diaz and Ures \cite{BDU2002} and Nobili \cite{N2015}. 

The work is organized as follows. Section~\ref{sec:preliminaries} is devoted to recall  known results  which will be used later. In Section~\ref{sec:demteoB} we prove  Theorem~\ref{mteo:B}. Theorem~\ref{mteo:C} and Theorem~\ref{mteo:D} are proved in Section~\ref{sec:demteosCD}. As we mention before, in Section~\ref{sec:DHP Blocks} we show examples where  NUE-condition and wNUE-condition  fails to be robust,  proving the statement of Theorem~\ref{mteo:A}. Finally, in Section~\ref{sec:DA}, and following the ideas developed in \cite{BV2000}, we shown that classical example of a non-hyperbolic robustly transitive partial hyperbolic diffeomorphism due to Ma\~{n}e \cite{M1978} is mostly expanding along the central direction. In particular the previous result can be applied to such class of examples.

\section{Preliminaries}\label{sec:preliminaries}

Along this section we summarize the main results related to the existence of physical measure in the setting of partially hyperbolic diffeomorphisms. The key ingredient are the Gibbs $u$-states.

Along this section $f$ be a partial hyperbolic diffeomorphism. We denote by $\mathcal{M}(M)$  the set of  probability measures defined on $M$ provided with the weak* topology and  denote by $\mathcal{M}(f)$  the set of  probability measures invariant by $f$. It is well known that  $\mathcal{M}(f)$ is a convex compact subset of $\mathcal{M}(M)$ and moreover, every invariant measure have a decomposition into ergodic measures (cf. ~\cite{M1987}).

An $f$-invariant probability measure $\mu$ is a {\em Gibbs $u$-state} or {\em $u$-measure} if the conditional measures of $\mu$ with respect to the partition into local strong-unstable manifolds are absolutely continuous with respect to Lebesgue measure along the corresponding local strong-unstable manifold. More precisely, given a point $z\in M$, we define a {\em foliated box} of $z$ in the following way. Pick a strong-unstable disk  $D$ with center at $z$, and take a cross section $\Sigma$ to the strong-unstable foliation through the center point $z$.  Then there exists $\phi\::\: D\times \Sigma\to M$ which is a homeomorphism onto its image, such that  $\phi$ maps each horizontal $D\times \{y\}$ diffeomorphically to an unstable domain through $y$. We may choose $\phi$ such that $\phi(z,y)=y$ for all $y\in \Sigma$ and $\phi(x,z)=x$ for all $x\in D$. In what follows, we identify $D\times \Sigma$, 
and each $D\times\{y\}$, with their images under this chart $\phi$. Given any measure 
$\xi$ on $D\times \Sigma$, we denote by $\hat{\xi}$ the measure on $\Sigma$ defined by
\begin{equation}\label{eq:medidacompl}
 \hat{\xi}(B)=\xi(D\times B).
\end{equation}
 An $f$-invariant probability measure $\mu$ is a {\em Gibbs $u$-state} or {\em $u$-measure} if, for every foliated neighborhood $D\times \Sigma$ such that $\mu(D\times \Sigma)>0$, the conditional measures of $\mu|(D\times \Sigma)$ with respect to the partition into  strong unstable plaques $\{ D\times \{y\}\::\: y\in \Sigma\}$ are absolutely continuous with respect to Lebesgue measure along the corresponding plaque.

Gibbs $u$-states play a key role in the theory: If $\mu$ is a physical measure for a partially hyperbolic diffeomorphism, then  $\mu$ must be a Gibbs $u$-state \cite[Section 11.2.3]{BDV2005}.

In the early eighties, Pesin and Sinai \cite{PS1982} proved that the set of the Gibbs $u$-states of $f$ is non-empty for $C^r$- partially hyperbolic diffeomorphisms, $r>1$. More precisely, denote by $u$ the dimension of the bundle $E^{u}$. If $D^u$ is a $u$-dimensional disk inside a strong unstable leaf, and ${\rm Leb\:}_{D^u}$ denotes the volume measure induced on $D^u$ by the restriction of the Riemannian metric to $D^u$, then every accumulation point of the sequence of probability measures
$$\mu_n=\frac1n\sum_{j=0}^{n-1}f_*^j\left(\frac{{\rm Leb}_{D^u}}{{\rm Leb}_{D^u}(D^u)}\right)$$
is a  Gibbs $u$-state with densities with respect to Lebesgue measure along the strong unstable plaques uniformly bounded away from zero and infinity.  It is possible to extend the result obtained by Pesin and Sinai \cite[Theorem 11.16]{BDV2005}:

\begin{prop}\label{prop:uM6}
There exists $E\subseteq M$ intersecting every unstable disk on a full Lebesgue measure subset, such that for any $x\in E$, every accumulation point $\nu$ of 
\begin{equation}\label{eq:uM6}
 \nu_{n,x}=\frac1{n}\sum_{j=0}^{n-1}\delta_{f^j(x)}
\end{equation}
is a Gibbs $u$-state.
\end{prop}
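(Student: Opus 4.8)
The plan is to build a single $f$-invariant Borel set $E$ that works simultaneously for all unstable disks, rather than constructing it disk by disk. Write $\mathcal{G}\subseteq\mathcal{M}(f)$ for the set of Gibbs $u$-states and, for $x\in M$, let $V(x)$ denote the set of weak$^*$-accumulation points of the empirical measures $\nu_{n,x}$ in \eqref{eq:uM6}. I will take
\[
E=\{x\in M:\ V(x)\subseteq\mathcal{G}\}.
\]
The structural facts about $\mathcal{G}$ that I use are that it is non-empty (by the Pesin--Sinai construction recalled above), compact and convex, and stable under ergodic decomposition, i.e.\ the ergodic components of a Gibbs $u$-state are again Gibbs $u$-states (see \cite[Section 11]{BDV2005}). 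I also record two elementary properties of $E$. Since $\nu_{n,fx}$ and $\nu_{n,x}$ differ by $\tfrac1n(\delta_{f^n x}-\delta_x)\to 0$, one has $V(fx)=V(x)$, so $E$ is $f$-invariant; and since $d(f^jx,f^jx')\to 0$ whenever $x'\in W^{s}(x)$, one has $V(x')=V(x)$, so $E$ is saturated by strong-stable leaves.

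I next show that $\mu(E)=1$ for every $\mu\in\mathcal{G}$. By the ergodic decomposition and the stability of $\mathcal{G}$ under it, it suffices to treat an ergodic Gibbs $u$-state $\theta$. Applying Birkhoff's ergodic theorem to a countable $C^{0}$-dense family of observables and intersecting the resulting full-measure sets, for $\theta$-a.e.\ $y$ one gets $\nu_{n,y}\to\theta$ in the weak$^*$ topology; hence $V(y)=\{\theta\}\subseteq\mathcal{G}$ and $y\in E$. Thus $\theta(E)=1$, and integrating over the ergodic decomposition yields $\mu(E)=1$ for all $\mu\in\mathcal{G}$.

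It remains to prove that $E$ meets every unstable disk $D^{u}$ in a full leaf-volume subset, and this is the main obstacle: $E$ is only a Borel set, while weak$^*$ convergence does not detect the mass of Borel sets. Suppose, for contradiction, that $A:=D^{u}\setminus E$ has positive leaf volume. Applying the Pesin--Sinai construction to the bounded density $\mathbf 1_{A}$ (absolutely continuous along $D^u$) instead of $\mathbf 1_{D^{u}}$, every accumulation point $\nu'$ of $\tfrac1n\sum_{j=0}^{n-1}f^{j}_{*}\big({\rm Leb}_{D^{u}}|_{A}/{\rm Leb}_{D^{u}}(A)\big)$ is a Gibbs $u$-state with strong-unstable conditional densities uniformly bounded; by the previous paragraph $\nu'(E)=1$. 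On the other hand, since $E$ (hence $E^{c}\supseteq A$) is $f$-invariant, every measure in the average is supported on $E^{c}$. To extract the contradiction I compare, inside a foliated box, the strong-unstable plaques carrying the approximating measures with those carrying $\nu'$: these are matched, up to small error, by the strong-stable holonomy, which for $r>1$ is absolutely continuous with bounded Jacobian. Because $E^{c}$ is saturated by strong-stable leaves, this holonomy preserves the leaf-volume proportion occupied by $E^{c}$ on the plaques, which equals $1$ for the approximating plaques; combined with the upper bound on the conditional densities of $\nu'$ (which rules out escape of mass to small sets), this forces $\nu'(E^{c})>0$, contradicting $\nu'\in\mathcal{G}$. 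Hence ${\rm Leb}_{D^{u}}(A)=0$, and $E$ as defined above has all the required properties, the only genuinely delicate point being the density/holonomy transfer just described.
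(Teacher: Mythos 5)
Your definition of $E$, its $f$-invariance and saturation by strong-stable leaves, the fact that $\mu(E)=1$ for every Gibbs $u$-state (via ergodic decomposition and Birkhoff), and the reduction of the proposition to ${\rm Leb}_{D^u}(D^u\setminus E)=0$ are all correct; note that the paper itself gives no proof of this proposition but simply quotes \cite[Theorem 11.16]{BDV2005}, so your attempt is a genuine reconstruction. Even the Pesin--Sinai step for the density $\mathbf{1}_A$ is fine: since $\lambda_A\leq \big({\rm Leb}_{D^u}(D^u)/{\rm Leb}_{D^u}(A)\big)\lambda_{D^u}$, the averaged push-forwards of $\lambda_A$ are dominated by a fixed multiple of those of $\lambda_{D^u}$, and this domination passes to weak* limits, so any accumulation point $\nu'$ is an invariant measure absolutely continuous with respect to a Gibbs $u$-state with bounded Radon--Nikodym derivative, hence a Gibbs $u$-state with conditional densities bounded above. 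The gap is in the decisive last step: the plaques carrying the approximating measures are pieces $Q\subseteq f^j(D^u)$, and on such a piece the set $E^c$ occupies exactly the leaf-volume proportion of $f^j(A)\cap Q$, which is \emph{not} $1$ (the set $A$ has positive, not full, measure in $D^u$). The carriers $f^j(A)$ themselves do have ``$E^c$-proportion one,'' but they are merely measurable sets, not plaques, so the holonomy/proportion-transfer mechanism cannot be applied to them.

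This is not a cosmetic slip: with what you actually have, at best a definite fraction of the $\mu_n$-mass sits on plaques whose $E^c$-proportion is at least some small $\rho>0$ (roughly ${\rm Leb}_{D^u}(A)/{\rm Leb}_{D^u}(D^u)$, up to distortion constants), and after transferring by stable holonomy one only gets that $\nu'$-a.e.\ plaque $P$ satisfies ${\rm Leb}_P(E^c\cap P)\geq (\rho/2)\,{\rm Leb}_P(P)$. That does not contradict $\nu'(E)=1$: a conditional density bounded above by $C$ and carried on $E\cap P$, a set of proportion $1-\rho/2$, is perfectly consistent with such an estimate; the upper density bound forces mass onto $E^c\cap P$ only when $E\cap P$ occupies proportion smaller than $1/C$. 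The missing idea is a Lebesgue density point/bounded distortion argument: replace $D^u$ by a small subdisk $D'$ centered at a density point of $A$, so that $A$ has proportion $\geq 1-\delta$ in $D'$; uniform bounded distortion of $f^j$ along unstable disks then guarantees that, for every $j$, all but a $C\sqrt{\delta}$-fraction of the mass of $f^j_*\lambda_{A\cap D'}$ lies on unit plaques in which $f^j(A)$, hence $E^c$, has proportion $\geq 1-\sqrt{\delta}$. With that quantitative input your holonomy mechanism does close the argument: $s$-saturation of $E^c$ together with absolute continuity of the strong-stable holonomy (here $r>1$ is used) forces $\nu'$-a.e.\ plaque $P$ to satisfy ${\rm Leb}_P(E\cap P)\leq (\sqrt{\delta}+o(1))\,{\rm Leb}_P(P)$, and the upper bound $C$ on the conditional densities of $\nu'$ then yields $\nu'(E)\leq C(\sqrt{\delta}+o(1))<1$, contradicting $\nu'(E)=1$. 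Without the density-point zoom, the claimed ``proportion equals $1$ on the approximating plaques'' is false and the contradiction evaporates.
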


The support of any Gibbs $u$-state consists of entire strong unstable leaves \cite[Corollary 11.14]{BDV2005}. Moreover, a convex combination of Gibbs $u$-states is a Gibbs $u$-state. 
Conversely,   if  $\mu$ is a Gibbs $u$-state, its ergodic components are  Gibbs $u$-states whose  densities are uniformly bounded away from zero and infinity \cite[Lemma 11.13]{BDV2005}. 

Denote by $\mathcal{G}^u(f)\subseteq \mathcal{M}(f)$ the set of Gibbs $u$-states  of $f$. The assertion above implies that $\mathcal{G}^u(f)$ is convex. Furthermore,  the set $\mathcal{G}^u(f)$ provided with the weak* topology  is closed in  $\mathcal{M}(f)$ and so compact \cite[Theorem 5]{BDPP2008}. Moreover, given any sufficiently small $C^r$-neighborhood $\mathcal{U}$ of $f$, $r>1$, the set 
\begin{equation}
\mathcal{G}^u(\mathcal{U},M) = \{(g,\mu) : g \in \mathcal{U} \text{ and }\mu \text{ a Gibbs $u$-state of  $g$ }\}
\end{equation}
is closed in $\mathcal{U} \times \mathcal{M}(M)$ \cite[Remark 11.15]{BDV2005} when we consider the product topology.

In the partial hyperbolic setting, notions like physical measures, Gibbs $u$-states, non zero Lyapunov exponents, and  stable ergodicty are closely related. We will explain some known relations useful for ours purposes. We refer the reader to \cite{BDV2005, P2010,PC2010,Y2002}, and then  references therein, for a complete discussion about the relations to be discussed now. 

As mentioned above, physical measures are Gibbs $u$-states in the setting of partially hyperbolic diffeomorphisms, but the converse it is not true even in the uniformly hyperbolic setting  as the reader can notice  from the  example at the end of this section. It is well known that if $\mu$ is an ergodic Gibbs $u$-state with negative central Lyapunov exponents, that is, if
$$\limsup_{n\to+\infty}\frac{1}{n}\log\|Df^n|E^{c}_x\|<0$$
for $\mu$-almost every point $x\in M$, then $\mu$ is a physical measure (see \cite[Theorem 3]{Y2002} and the reference therein). The statement follows from classical arguments \cite{PS1989}.

This is a good motivation to introduce the notion of mostly contracting: A partially hyperbolic diffeomorphism $f$  is {\em mostly contracting} along the center subbundle if
\begin{equation}\label{eq:mostlycontracting}
\limsup_{n\to+\infty}\frac{1}{n}\log\|Df^n|E^{c}_x\|<0
\end{equation}
for a positive Lebesgue measure set of points $x$ in every disk $D^u$ contained in a strong unstable local manifold. In such case, it was proved in \cite{BV2000} that if $f$  is partially hyperbolic and strongly mostly contracting along the center subbundle, then $f$ admits finitely many ergodic physical  measures, and the union of their basins covers a full Lebesgue measure subset of the basin of $M$. A related notion of mostly contracting was studied by Dolgopyat in \cite{D2000}.  Ten years later, in \cite{A2010} was proved that  mostly contracting  property \eqref{eq:mostlycontracting} is equivalent to 
every (ergodic) Gibbs $u$-state  has negative central Lyapunovs exponents.

In the setting of partially hyperbolic diffeomorphisms with absence of strong unstable direction, that is, when  $f:M\to M$ is a $C^r$-diffeomorphism, $r>1$, with decomposition of the tangent bundle $TM=E^s\oplus E^c$, it makes no sense to speak of Gibbs $u$-states. For this setting,   Alves, Bonatti and Viana \cite{ABV2000} introduced the notion of Gibss $cu$-state using the fact that, in the presence of positive Lyapunov exponents, there are Pesin invariant unstable manifolds. Thus Gibss $cu$-states correspond to a non-uniform version of  Gibbs $u$-states: An invariant probability measure $\mu$ is a {\em Gibbs $cu$-state} if the $m$ largest Lyapunov exponents are positive $\mu$-almost everywhere, where $m=\dim E^c$, and the conditional measures of $\mu$ along the corresponding local unstable Pesin's manifolds are almost everywhere absolutely continuous with respect to Lebesgue measure on these manifolds.

Alves, Bonatti and Viana showed that  a $C^r$-non uniformly expanding partially hyperbolic diffeomorphism (recall Definition~\ref{defi: NUEcond}), $r>1$,  exhibits (ergodic) Gibbs $cu$-states which are physical measures \cite{ABV2000}. Moreover, if $H=M$, then $f$ admits finitely 
many (ergodic) physical measures, and the union of their basins covers a full Lebesgue measure subset of $M$. Same conclusion was reached by Alves, Dias, Luzzatto and Pinheiro in \cite{ADLP2014} under the wNUE-condition (recall Definition~\ref{defi: wNUEcond}).
We  record the precise statement obtained by the authors above for future references.

\begin{prop}[\cite{ADLP2014}, Theorem A]\label{prop:ADLP2014}
Let $f : M \to M$ be a $C^r$, $r>1$, partially hyperbolic diffeomorphism with decomposition $TM = E^s \oplus E^{cu}$. Assume that there exists a subset $H\subseteq M$ of positive Lebesgue measure on which $f$ is  weakly non-uniformly expanding along $E^{cu}$. Then
\begin{enumerate}
\item there exist closed invariant transitive sets $\Omega_1, \dots, \Omega_\ell$ such that for Lebesgue almost every $x \in H$ we have $\omega(x) = \Omega_j$ for some $1 \leq j \leq \ell$;
\item  there exist (ergodic) Gibbs $cu$-states $\mu_1, ..., \mu_\ell$ supported on the sets $\Omega_1, \dots, \Omega_\ell$, whose basins whose basins are open up to a zero Lebesgue measure set, such that for Lebesgue almost every $x \in H$ we have $x\in \mathcal{B}(\mu_j)$ for some $1 \leq j \leq \ell$.
\end{enumerate}
\end{prop}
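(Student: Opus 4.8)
The plan is to follow the hyperbolic-times strategy of Alves, Bonatti and Viana \cite{ABV2000}, adapting it to the weaker recurrence furnished by the $\liminf$ in \eqref{eq:wNUE}. The starting point is a Pliss-type lemma: for Lebesgue almost every $x\in H$, the inequality in \eqref{eq:wNUE} lets one select an infinite sequence of \emph{$c_0$-hyperbolic times} $n_1<n_2<\cdots$, i.e.\ integers $n$ for which
\begin{equation}
\prod_{j=k}^{n-1}\|Df^{-1}|E^{cu}_{f^{j}(x)}\|\le e^{-c_0(n-k)/2}\qquad\text{for all }0\le k<n.
\end{equation}
At each such time the backward contraction along $E^{cu}$ is uniform, so the graph transform produces, around $f^{n}(x)$, an embedded center-unstable disk of uniform inner radius on which $f^{-n}$ contracts exponentially and along which the Jacobian of $f$ restricted to $E^{cu}$ has uniformly bounded distortion.

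Having fixed a reference disk $D$ tangent to $E^{cu}$, I would push forward the normalized leaf-Lebesgue measure ${\rm Leb\:}_{D}/{\rm Leb\:}_{D}(D)$ and form the Ces\`aro averages $\frac1n\sum_{j=0}^{n-1} f^j_*({\rm Leb\:}_{D}/{\rm Leb\:}_{D}(D))$. Using the uniform size and bounded distortion of the disks at hyperbolic times, any weak$^*$ accumulation point has conditional measures along the local unstable Pesin manifolds that are absolutely continuous with densities bounded away from $0$ and $\infty$; hence it is a Gibbs $cu$-state in the sense recalled before the statement. Positivity of the $m=\dim E^c$ largest exponents $\mu$-almost everywhere is obtained by integrating the hyperbolic-time estimate against the invariant limit measure.

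For the finiteness and the sets $\Omega_j$, recall that the support of an ergodic Gibbs $cu$-state consists of entire unstable Pesin leaves and is an invariant transitive set; call these $\Omega_1,\dots,\Omega_\ell$. Finiteness follows because each ergodic Gibbs $cu$-state has unstable conditional density bounded below by a \emph{uniform} constant, so the unstable plaques of distinct ergodic components cannot accumulate on one another; a Vitali covering of a single unstable disk by such plaques then forces only finitely many ergodic components to meet a positive-measure set. The $\omega$-limit assertion \textup{(i)} is proved by showing, again via hyperbolic times, that for almost every $x\in H$ the forward orbit eventually enters and equidistributes in one of the $\Omega_j$. For \textup{(ii)}, the classical arguments of \cite{PS1989} together with the absolute continuity of the family of Pesin stable manifolds show that the set of points whose empirical averages converge to $\mu_j$ contains a full-measure subset of a union of local stable manifolds through the disks constructed above; saturating by stable manifolds yields a basin that is open modulo a null set, and these basins cover $H$ up to zero measure.

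The main obstacle is exactly the weakening from \eqref{eq:NUE} to \eqref{eq:wNUE}: the $\liminf$ only guarantees hyperbolic times along a subsequence of \emph{possibly zero density}, whereas the $\limsup$ condition used in \cite{ABV2000} delivers hyperbolic times with positive density. One therefore cannot average over all iterates and expect the good-time contributions to dominate. The delicate point is to run the distortion control and the absolute-continuity argument with such sparse return times — I expect this to be the crux, and it should be handled either by replacing the uniform Ces\`aro average with a sub-scheme concentrated on hyperbolic times, or by a covering argument that is insensitive to the density of good times, which is precisely the technical contribution of \cite{ADLP2014}.
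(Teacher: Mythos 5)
First, a point about the comparison itself: the paper does not prove this proposition. It is recorded verbatim, with attribution, as Theorem~A of \cite{ADLP2014}, and is then used as a black box later (in Lemma~\ref{wNUE on H} and Lemma~\ref{le:existecumedida}). Indeed the paper explicitly remarks that the methods of \cite{ADLP2014} needed for this weaker hypothesis are ``completely different'' from those of \cite{ABV2000}. So the only question is whether your argument would stand as an independent proof of the theorem, and it would not.

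The gap is exactly the one you name in your final paragraph, and it is not a side issue --- it is the entire content of the theorem. Your first two paragraphs reproduce the strategy of \cite{ABV2000}, which works under the $\limsup$ condition \eqref{eq:NUE}: there, for almost every $x \in H$ and \emph{every} sufficiently large $n$, Pliss's lemma yields a definite fraction of hyperbolic times in $[0,n]$, uniformly enough in $x$ that the Ces\`aro averages $\frac1n\sum_{j=0}^{n-1} f^j_*({\rm Leb}_D)$ carry a uniform fraction of their mass on disks of uniform size with bounded distortion; this is what forces accumulation points to have an absolutely continuous part. Under \eqref{eq:wNUE} this breaks down: Pliss's lemma applies only along the subsequence of times realizing the $\liminf$, so each point has hyperbolic times of positive \emph{upper} density but possibly zero lower density, and, worse, the good blocks $[0,n_i(x)]$ are not synchronized across the points of the reference disk. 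Hence there need not exist any common subsequence of times along which a definite fraction of the pushed-forward mass sits at hyperbolic times, and accumulation points of your Ces\`aro averages may have no absolutely continuous component at all. Your closing sentence defers precisely this step to ``the technical contribution of \cite{ADLP2014}'' --- but Theorem~A of \cite{ADLP2014} \emph{is} the statement being proved, so as a proof the proposal is circular. For the record, the actual argument in \cite{ADLP2014} abandons the Ces\`aro scheme altogether and constructs the Gibbs $cu$-states via inducing (induced Markov structures built at hyperbolic-type times, in the spirit of Pinheiro's theory of expanding measures), which is also what delivers the finiteness, the openness of basins modulo null sets, and item (i); your separate finiteness argument via ``plaques of distinct ergodic components cannot accumulate'' is, as written, only a heuristic.
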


In \cite{V2007}, the author establish  several properties of Gibbs $cu$-states which are similar to the ones for Gibbs $u$-states: \cite[Theorem 2.1]{V2007}, The ergodic components of a  Gibbs $cu$-state $\mu$ are Gibbs $cu$-states whose  densities are uniformly bounded away from zero and infinity. Conversely, a convex combination of Gibbs $cu$-states is a Gibbs $cu$-state. The support of any Gibbs $cu$-state consists of entire center-unstable leaves. In the setting of partially hyperbolic diffeomorphisms with non-uniform expansion,  every ergodic physical measure is a  Gibbs $cu$-state.

In the case  where $f$ is a non-uniformly expanding partially hyperbolic diffeomorphism with decomposition of the tangent bundle $TM=E^s\oplus E^c\oplus E ^u$, every Gibbs $cu$-state is in fact a Gibbs $u$-state with positive central Lyapunov exponents. The converse is not true, even in the case of Anosov diffeomorphisms. 

\noindent{\bf Example:} Consider two linear Anosov diffeomorphisms $A_1, A_2$ over the torus $\mathbb{T}^2$ with splittings $E^u_1\oplus E^s_1$ and $E^u_2\oplus E^s_2$ respectively, and with (unstable) eigenvalues $\lambda_1>\lambda_2>1$ respectively. Now consider $f\::\:\mathbb{T}^2\times\mathbb{T}^2\to \mathbb{T}^2\times\mathbb{T}^2$ defined by $f=A_1\times A_2$. If we consider the decomposition $E^u=E^u_1$, $E^c=E^u_2$ and $E^s=E^s_1\oplus E^s_2$ of the tangent bundle of $\mathbb{T}^2\times\mathbb{T}^2$, then $f$ is a partially hyperbolic diffeomorphism with positive central Lyapunov exponent at every point. Consider the measure $\mu=\mu_1\times\mu_2$, where $\mu_1$ is an Gibbs $u$-state for $A_1$ and $\mu_2$ is the Dirac measure supported on a periodic orbit of $A_2$. Then $\mu$ is a  Gibbs $u$-state for $f$ whose central Lyapunov exponents are positive but it is not a Gibbs $cu$-state.


\section[Proof of Theorem~\ref{mteo:B}]{Proof of Theorem~\ref{mteo:B}}\label{sec:demteoB}

For every $f\in {\rm Diff}^r(M)$ partially hyperbolic and   $x\in M$,  denote by 
\begin{equation}\label{eq:centralLyapexp}
\lambda^c(x,f):=\liminf_{n\to+\infty}\frac{1}{n}\log{\rm m}(Df^n|E^{c}_x).
\end{equation}
The diffeomorphism $f$ is   {\em  strongly mostly expanding} (cf. Definition~\ref{defi:strongmostlyexpanding}) if $\lambda^c(x,f)>0$ for a positive Lebesgue measure set of points $x$ in every disk $D^u$ contained in a strong unstable local manifold. If $x\in M$ is a regular point, the number above is the minimum of the  Lyapunov  exponents of $x$  whose Osedelec splitting is contained in the central direction. According to Osedelec Theorem (see \cite[Theorem 10.1]{M1987}), the set of regular points is a Borel set with total measure.
In particular, the set of regular points has full measure with respect to every Gibbs $u$-state .  Note that the function defined by (\ref{eq:centralLyapexp}) is $f$ invariant. Then, $\lambda^c(x,f)=:\lambda^c(\mu,f)$ is constant, for $\mu$-a.e. $x$ when $\mu$ is ergodic.   

\begin{prop}\label{prop:defequivalente1}
Let $f:M\to M$ be a strongly mostly expanding partially hyperbolic diffeomorphism, then $f$ is mostly expanding.
\end{prop}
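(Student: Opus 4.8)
The plan is to reduce first to \emph{ergodic} Gibbs $u$-states and then to exploit the absolute continuity of their conditional measures together with the crucial fact that their densities are bounded away from zero. As a preliminary observation, the function $x\mapsto\lambda^c(x,f)$ is $f$-invariant (as noted just before the statement) and Borel measurable, being a $\liminf$ of functions $\tfrac1n\log{\rm m}(Df^n|E^c_x)$ that depend continuously on $x$ since $E^c$ is continuous. Hence the set $E^+=\{x\in M:\lambda^c(x,f)>0\}$ is an $f$-invariant Borel set. Writing an arbitrary Gibbs $u$-state as the integral of its ergodic components, and recalling from Section~\ref{sec:preliminaries} that these components are themselves Gibbs $u$-states, it suffices to prove that $\mu(E^+)=1$ for every ergodic Gibbs $u$-state $\mu$.

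So I would fix such a $\mu$. Since $\mu$ is ergodic and $E^+$ is invariant, $\mu(E^+)\in\{0,1\}$, and the whole problem reduces to showing $\mu(E^+)>0$. To this end I would choose a foliated box $D\times\Sigma$ with $\mu(D\times\Sigma)>0$, which exists because $M$ is covered by finitely many such boxes. Disintegrating $\mu$ along the unstable plaques $P_y=D\times\{y\}$ yields conditional measures $\mu_y$, and because $\mu$ is an \emph{ergodic} Gibbs $u$-state, \cite[Lemma 11.13]{BDV2005} guarantees that each $\mu_y$ is absolutely continuous with respect to the leaf-volume ${\rm Leb}_{P_y}$ with density uniformly bounded away from zero and infinity.

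Now the strongly mostly expanding hypothesis enters. Each plaque $P_y$ is a disk contained in a strong unstable local manifold, so the definition gives ${\rm Leb}_{P_y}(E^+)>0$ for every $y$. Since the density $d\mu_y/d{\rm Leb}_{P_y}$ is bounded below by some constant $c>0$, positive leaf-volume is transferred to positive conditional mass: $\mu_y(E^+)\geq c\,{\rm Leb}_{P_y}(E^+)>0$ for $\hat\mu$-a.e.\ $y$. Integrating over $\Sigma$ then gives
$$\mu(E^+)\;\geq\;\mu\bigl(E^+\cap(D\times\Sigma)\bigr)\;=\;\int_\Sigma \mu_y(E^+)\,d\hat\mu(y)\;>\;0.$$
Together with ergodicity this forces $\mu(E^+)=1$, which says precisely that $\mu$ has positive central Lyapunov exponents almost everywhere; since this holds for every ergodic component, it holds for every Gibbs $u$-state, and $f$ is mostly expanding.

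I expect the only genuinely delicate step to be the transfer of positivity from ${\rm Leb}_{P_y}$ to $\mu_y$. Mere absolute continuity $\mu_y\ll{\rm Leb}_{P_y}$ would be insufficient, as it propagates \emph{null} sets in one direction only; what makes the argument work is precisely that, for ergodic components, the densities are bounded \emph{below} away from zero, so that $\mu_y$ and ${\rm Leb}_{P_y}$ are in fact equivalent. This is exactly why the reduction to ergodic Gibbs $u$-states must be carried out \emph{before} invoking the hypothesis rather than after.
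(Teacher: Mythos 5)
Your proof is correct and takes essentially the same route as the paper's: reduce to ergodic Gibbs $u$-states via the ergodic decomposition, disintegrate over a foliated box of positive measure, and apply the strongly mostly expanding hypothesis plaque by plaque. If anything you are more explicit than the paper on the one delicate step: the paper only invokes ``absolute continuity'' of the conditional measures, whereas the transfer of positivity between ${\rm Leb}_{P_y}$ and $\mu_y$ genuinely requires the densities of ergodic Gibbs $u$-states to be bounded away from zero (i.e.\ equivalence of the two measures on plaques), which you correctly identify and cite.
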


\proof Assume that $f$ is strongly mostly expanding. It is enough to prove the assertion for ergodic Gibbs $u$-states and, in such case, to prove that all central Lyapunov exponents are positive on a set of positive $\mu$-measure. Pick a  foliated neighborhood $D\times \Sigma$ such that $\mu(D\times \Sigma)>0$. By definition of Gibbs $u$-states, the conditional measures of $\mu|(D\times \Sigma)$ with respect to the partition into  strong unstable discs $\{ D\times \{y\}\::\: y\in \Sigma\}$ are absolutely continuous with respect to Lebesgue measure along the corresponding unstable discs. From such disintegration and absolutely continuity, we conclude for $\hat{\xi}$-almost every point  $y\in\Sigma$ (recall that $\hat{\xi}$ was defined in \eqref{eq:medidacompl}), the central Lyapunov exponents are well defined in a set of Lebesgue full measure in each the strong unstable discs $D\times \{y\}$. Since $f$ is strongly  mostly expanding, such central Lyapunov exponents must be positive. 
\endproof

Recall that  $\mathcal{M}(M)$ denotes the set of probability measures on $M$ provided with the weak* topology and $\mathcal{G}^u(f)\subseteq \mathcal{M}(M)$ denotes the set of Gibbs $u$-states  of $f$. Let $\mathcal{S}\subseteq {\rm Diff}^r(M)\times \mathcal{M}(M)$ be the set of pairs $(f,\mu)$ where $f$ is  a $C^r$ partially hyperbolic diffeomorphism and $\mu$ is a Gibbs $u$-state for $f$.  We consider $\mathcal{S}$ endorsed with the product topology induced from ${\rm Diff}^r(M)\times \mathcal{M}(M)$.

Let us consider the function $\Lambda^c\::\:\mathcal{S}\to\mathbb{R}$ defined as
$$\Lambda^c(f,\mu):=\int_M {\lambda}^c(x,f)\:d\mu(x).$$

Now,  let $\mu$ be any Gibbs $u$-state. By convexity of the set of Gibbs $u$-states of $f$, $\mu$ is a convex combination of ergodic Gibbs $u$-states $(\mu_x)_{x\in M}$. Therefore, the ergodic cecomposition theorem implies that
\begin{equation}\label{eq:Lambdac}
\Lambda^c(f,\mu):=\int\lambda^c(x,f)d\mu(x)=
\int\int\lambda^c(x,f)d\mu_x\:d\mu(x)=\int\lambda^c(\mu_x)\:d\mu(x).
\end{equation}

The next lemma will be useful in the proof  of Proposition~\ref{prop:meaberto}.

\begin{lema}\label{le:equivpositivo} A partial hyperbolic diffeomorphims $f$ is  mostly expanding if and only if $\Lambda^c(f,\mu)>0$, for every $\mu\in\mathcal{G}^u(f)$.
\end{lema}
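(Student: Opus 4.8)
The plan is to prove the two implications separately. The forward direction is essentially a positivity-of-integral argument, while the reverse direction is where the real work lies and will be handled through ergodic decomposition. Throughout I will use the uniform bound coming from \eqref{PH2}: since $C^{-1}\lambda_2^n \le {\rm m}(Df^n|E^c_x) \le C\mu_2^n$, dividing by $n$ and taking the $\liminf$ gives $\log\lambda_2 \le \lambda^c(x,f) \le \log\mu_2$ for every regular $x$, so that $\lambda^c(\cdot,f)$ is a bounded (hence integrable) function with respect to any probability measure.

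First I would assume that $f$ is mostly expanding and fix an arbitrary $\mu\in\mathcal{G}^u(f)$. By Definition~\ref{defi:mostlyexpanding}, $\lambda^c(x,f)>0$ for $\mu$-almost every $x$. A bounded measurable function that is strictly positive almost everywhere has strictly positive integral, so $\Lambda^c(f,\mu)=\int_M\lambda^c(x,f)\,d\mu(x)>0$. As $\mu$ was arbitrary, this yields the implication ``mostly expanding $\Rightarrow \Lambda^c>0$ on $\mathcal{G}^u(f)$''.

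For the converse I would argue by contraposition. Suppose $f$ is \emph{not} mostly expanding; then there is some $\mu\in\mathcal{G}^u(f)$ for which the set $A=\{x:\lambda^c(x,f)\le 0\}$ has positive $\mu$-measure. The structural input I would invoke here is that the ergodic components of a Gibbs $u$-state are again Gibbs $u$-states (\cite[Lemma 11.13]{BDV2005}, recalled in Section~\ref{sec:preliminaries}), so that in the ergodic decomposition $\mu=\int\mu_x\,d\mu(x)$ each $\mu_x$ lies in $\mathcal{G}^u(f)$. Since $\lambda^c(\cdot,f)$ is $f$-invariant and bounded, its Birkhoff average along any orbit equals its own value, and the ergodic decomposition theorem therefore gives $\lambda^c(x,f)=\int\lambda^c\,d\mu_x=\lambda^c(\mu_x)=\Lambda^c(f,\mu_x)$ for $\mu$-almost every $x$ (this is exactly the identification underlying \eqref{eq:Lambdac}, where $\lambda^c(\mu_x)$ denotes the constant value of $\lambda^c$ on the ergodic component $\mu_x$). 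Consequently, for $\mu$-almost every $x\in A$ we obtain $\Lambda^c(f,\mu_x)=\lambda^c(x,f)\le 0$. Because $A$ has positive $\mu$-measure such points $x$ exist, and the corresponding $\mu_x$ is a Gibbs $u$-state with $\Lambda^c(f,\mu_x)\le 0$, contradicting the hypothesis that $\Lambda^c$ is positive on all of $\mathcal{G}^u(f)$. This completes the contrapositive and hence the equivalence.

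The heart of the matter, and the step requiring the most care, is the reverse direction: one must pass from a Gibbs $u$-state with a positive-measure set of nonpositive central exponents to a single \emph{ergodic} Gibbs $u$-state witnessing $\Lambda^c\le 0$. This hinges on the two facts recalled in Section~\ref{sec:preliminaries} — that ergodic components of Gibbs $u$-states remain Gibbs $u$-states, and that $\lambda^c$ is $f$-invariant so that it is constant on each ergodic component and equal to $\Lambda^c(f,\mu_x)$ there. The boundedness coming from \eqref{PH2} is what guarantees integrability and legitimizes applying the ergodic decomposition theorem to $\lambda^c$; the forward direction, by contrast, is a routine positivity argument.
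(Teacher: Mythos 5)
Your proof is correct and follows essentially the same route as the paper: both directions rest on the invariance and boundedness of $\lambda^c(\cdot,f)$, the fact that ergodic components of a Gibbs $u$-state are again Gibbs $u$-states, and the ergodic decomposition identity underlying \eqref{eq:Lambdac}, with your reverse direction (contraposition producing an ergodic component $\mu_x\in\mathcal{G}^u(f)$ with $\Lambda^c(f,\mu_x)\le 0$) matching the paper's contradiction argument. The only difference is cosmetic: in the forward direction you integrate the a.e.-positive bounded function $\lambda^c(\cdot,f)$ directly instead of passing through the ergodic decomposition, which is a slight streamlining rather than a different method.
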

\proof Assume that $f$ is mostly expanding. If $\nu$ is an ergodic Gibbs $u$-state, 
$$\lambda^c(x,f):=\lambda^c(\nu,f)>0,$$
for $\nu$-almost every point $x\in M$. Then 
$$\Lambda^c(f,\nu)=\int_M {\lambda}^c(x,f)\:d\nu(x)=\lambda^c(\nu,f)>0.$$
If $\nu$ is not ergodic, then from (\ref{eq:Lambdac}) we have
$$\Lambda^c(f,\nu):=\int\lambda^c(x,f)d\nu(x)=
\int\int\lambda^c(x,f)d\nu_x\:d\nu(x)=\int\lambda^c(\nu_x,f)\:d\nu(x)>0$$
since every $\nu_x$ is an ergodic Gibbs $u$-state and then $\lambda^c(\nu_x,f)>0$.

Now,  we assume that $\Lambda^c(f,\mu)>0$, for every $\mu\in \mathcal{G}^u(f)$. If $\mu$ is ergodic, then follows from  $\Lambda^c(f,\mu)=\lambda^c(\mu,f)$ that $\mu$ has positive central Lyapunov exponents. If $\mu$ is non ergodic and $\lambda^c(x,f)\leq 0$  for every $x\in A$, where $A$ is a set of positive $\mu$-measure. Then by Ergodic Decomposition Theorem, there exists an ergodic component $\mu_x$ of $\mu$ such that $x\in A$, $\mu_x(A)>0$, and 
$\lambda^c(x,f)=\lambda^c(\mu_x,f)\leq 0$, which is a contradiction.
\endproof

For every  $n\geq 1$, define $L_n:\mathcal{S}\to \mathbb{R}$ by
\begin{equation}
L_n(f,\mu):=\int\log{\rm m}(Df^{n}|E^c_{x})\: d\mu(x)
\end{equation}
Recall that the set $\mathcal{S}$ is endowed with the product topology induced from  ${\rm Diff}^r(M)\times \mathcal{M}(M)$. Therefore,  $L_n:\mathcal{S} \to \mathbb{R}$ is continuous, for every $n\geq 1$. Moreover, for a fixed  $(f,\mu)\in \mathcal{S}$, the sequence $(L_n(f,\mu))_{n\geq 1}$ is super additive, that is, for every integers $n,m\geq 1$, we have 
\begin{equation}\label{eq:supaddL}
L_{n+m}(f,\mu) \geq L_n(f,\mu)+L_m(f,\mu).
\end{equation}
This implies that the following limit exists (or is equal to $+\infty$).
$$\alpha=\alpha(f,\mu)=\lim_{n\to+\infty}\frac1n L_n(f,\mu)=\sup\{\frac{1}{n}L_n(f,\mu)\::\: n\in\mathbb{N}\}.$$

\begin{lema}\label{le:Lambdalim} For every $(f,\mu)\in \mathcal{S}$,
\begin{equation}\label{eq:Lambdalim}
\Lambda^c(f,\mu)=\lim_{n\to+\infty}\frac1n L_n(f,\mu).
\end{equation}
\end{lema}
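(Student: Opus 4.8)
The plan is to recognize that the quantity $\phi_n(x):=\log{\rm m}(Df^n|E^c_x)$ defines a \emph{superadditive cocycle} over $f$, and to deduce the identity from Kingman's subadditive ergodic theorem. The whole statement is really the assertion that the $\liminf$ defining $\lambda^c$ is an honest limit $\mu$-almost everywhere and that this limit integrates to the Cesàro limit of the $L_n$.

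First I would establish the cocycle relation. Since $E^c$ is $Df$-invariant, the chain rule gives $Df^{n+m}|E^c_x=(Df^n|E^c_{f^m(x)})\circ(Df^m|E^c_x)$, and because each restriction $Df^k|E^c_x$ is an invertible linear map, the conorm is supermultiplicative: for invertible $B$ and any $v\neq0$,
$$\frac{\|ABv\|}{\|v\|}=\frac{\|A(Bv)\|}{\|Bv\|}\cdot\frac{\|Bv\|}{\|v\|}\geq {\rm m}(A)\,{\rm m}(B),$$
so ${\rm m}(AB)\geq{\rm m}(A)\,{\rm m}(B)$. Applying this with $A=Df^n|E^c_{f^m(x)}$ and $B=Df^m|E^c_x$ and taking logarithms yields
$$\phi_{n+m}(x)\geq \phi_m(x)+\phi_n(f^m(x)),$$
which says exactly that $(-\phi_n)_{n\geq1}$ is a subadditive cocycle over $f$. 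Moreover $\phi_1(x)=\log{\rm m}(Df|E^c_x)$ is continuous on the compact manifold $M$, hence bounded, so $|\phi_n|\leq n\|\phi_1\|_\infty$ and the integrability hypothesis of Kingman's theorem is satisfied automatically.

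Next I would apply Kingman's subadditive ergodic theorem to $(-\phi_n)$. It furnishes an $f$-invariant measurable function $\psi$ with $\tfrac1n\phi_n(x)\to\psi(x)$ for $\mu$-almost every $x$ and
$$\int_M\psi\,d\mu=\lim_{n\to\infty}\frac1n\int_M\phi_n\,d\mu=\lim_{n\to\infty}\frac1n L_n(f,\mu),$$
the right-hand limit being precisely the number $\alpha(f,\mu)=\sup_n\tfrac1n L_n(f,\mu)$ whose existence was already recorded via superadditivity. To finish, I would match the two sides: since the limit $\lim_n\tfrac1n\phi_n(x)$ exists for $\mu$-a.e. $x$, its $\liminf$ coincides with it, and that $\liminf$ is by definition $\lambda^c(x,f)$; hence $\lambda^c(x,f)=\psi(x)$ for $\mu$-a.e. $x$. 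Integrating gives $\Lambda^c(f,\mu)=\int\lambda^c(x,f)\,d\mu=\int\psi\,d\mu=\lim_n\tfrac1n L_n(f,\mu)$, as claimed.

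There is no deep obstacle here: once the superadditive cocycle structure is identified, the argument is a bookkeeping application of Kingman's theorem. The single point demanding care is the last identification, namely that the $\liminf$ in the definition of $\lambda^c$ upgrades to a genuine $\mu$-a.e. limit—this is exactly what Kingman delivers, and without a.e. convergence one would only obtain an inequality relating $\Lambda^c$ and $\lim_n\tfrac1n L_n$ rather than equality.
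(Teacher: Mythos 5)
Your proof is correct, but it runs on a different engine than the paper's. The paper argues in two steps: first, the almost-everywhere convergence of $\Psi_n(x)=\frac1n\log{\rm m}(Df^n|E^c_x)$ to $\lambda^c(\cdot,f)$ is taken from Oseledets' theorem --- earlier in the section the authors observe that regular points have full measure with respect to every invariant measure, and at a regular point the $\liminf$ defining $\lambda^c$ is an honest limit (the minimum of the central Lyapunov exponents); second, the passage from pointwise convergence to convergence of the integrals $\frac1n L_n$ is done by dominated convergence, the domination coming from the partial hyperbolicity constants (the two-sided bounds of \eqref{PH2} make $\Psi_n$ uniformly bounded, although the paper only writes down the upper bound and with the constant $\lambda_3$ of \eqref{PH3}). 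You instead identify $\phi_n=\log{\rm m}(Df^n|E^c)$ as a superadditive cocycle and invoke Kingman's subadditive ergodic theorem, which delivers the a.e.\ convergence and the identity $\int\psi\,d\mu=\lim_n\frac1n L_n(f,\mu)$ in a single stroke. What your route buys: no appeal to Oseledets and no need for the partial hyperbolicity constants --- only continuity, hence boundedness, of $\log{\rm m}(Df|E^c)$ on the compact manifold --- and it subsumes the superadditivity \eqref{eq:supaddL} that the paper records separately. What the paper's route buys: since Oseledets regularity was already set up to interpret $\lambda^c$, the remaining work is an elementary dominated convergence argument, with no need to quote Kingman. One cosmetic slip in your write-up: the two-sided bound $|\phi_n|\le n\|\phi_1\|_\infty$ is not correct as stated (superadditivity yields only the lower bound; an upper bound needs $\sup_x\log\|Df|E^c_x\|$ or \eqref{PH2}), but this is immaterial, because Kingman's theorem only requires integrability of the first term of the subadditive sequence, and finiteness of $\lim_n\frac1n L_n$ follows from \eqref{PH2} in any case.
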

\proof This lemma is a direct consequence of dominated converge theorem. In fact, fix $(f,\mu)\in \mathcal{S}$ and let us consider the sequence of  
$\mu$-integrable functions on $M$ defined by  $\Psi_n(x)=\frac{1}{n}\log{\rm m}(Df^n|E^c_x)$, $x\in M$, $n\geq 1$. Then $\Psi_n$ converge $\mu$-almost every point to $\lambda^c(\cdot,f)$ and by partial hyperbolicity, $\Psi_n(x)\leq \frac{1}{n}\log C^{-1}+\log\lambda_3$, where $C\geq 0$ and $\lambda_3> 1$ are the constants in \eqref{PH3}. Hence,
\begin{eqnarray*}
\Lambda^c(f,\mu)&=&\int\lambda^c(x,f)d\mu(x)=\lim_{n\to+\infty}\int\Psi_n(x)\\&=&\lim_{n\to\infty}\frac1n\int\log  {\rm m}(Df^n|E^c_x)d\mu
=\lim_{n\to+\infty}\frac1n L_n(f,\mu).\end{eqnarray*}

\endproof

\begin{prop}
The function $\Lambda^c\::\:\mathcal{S}\to\mathbb{R}$ is lower semi-continuous.
\end{prop}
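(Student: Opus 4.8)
The plan is to exhibit $\Lambda^c$ as a pointwise supremum of continuous functions on $\mathcal{S}$, after which lower semi-continuity follows from a soft general principle. Indeed, recall from the discussion preceding Lemma~\ref{le:Lambdalim} that each $L_n\colon\mathcal{S}\to\mathbb{R}$ is continuous for the product topology (the map $(f,\mu)\mapsto\int\log{\rm m}(Df^n|E^c_x)\,d\mu$ depending continuously on $f$ in the $C^r$ topology and on $\mu$ in the weak* topology). Consequently every $\frac1n L_n$ is continuous on $\mathcal{S}$.

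The key structural observation I would make explicit is that the superadditivity relation \eqref{eq:supaddL}, via Fekete's lemma, expresses the limit as a supremum, so that together with Lemma~\ref{le:Lambdalim} one obtains
\[
\Lambda^c(f,\mu)=\lim_{n\to+\infty}\frac1n L_n(f,\mu)=\sup_{n\geq 1}\frac1n L_n(f,\mu).
\]
It is precisely the supremum representation, and not merely the limit, that drives the argument; this is the one place where a little care is needed, and it is supplied entirely by superadditivity.

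To conclude I would invoke the elementary fact that a pointwise supremum of an arbitrary family of continuous real-valued functions is lower semi-continuous. Concretely, for each $c\in\mathbb{R}$ the superlevel set satisfies
\[
\{(f,\mu)\in\mathcal{S}:\Lambda^c(f,\mu)>c\}=\bigcup_{n\geq 1}\Bigl\{(f,\mu)\in\mathcal{S}:\tfrac1n L_n(f,\mu)>c\Bigr\},
\]
which is a union of open subsets of $\mathcal{S}$ (each being the preimage of an open half-line under the continuous map $\frac1n L_n$), hence open. This is exactly lower semi-continuity of $\Lambda^c$.

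Since the whole argument is soft, I do not anticipate any genuine obstacle. The only auxiliary point worth noting is that the partial hyperbolicity estimate \eqref{PH2} bounds $\log{\rm m}(Df^n|E^c_x)$ from above and below uniformly, so $\Lambda^c$ indeed takes finite real values, consistent with the stated codomain $\mathbb{R}$; this finiteness is not even needed for the lower semi-continuity conclusion, which would persist verbatim were the supremum allowed to equal $+\infty$.
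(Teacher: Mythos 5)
Your proof is correct and follows essentially the same route as the paper: both arguments rest on the continuity of each $L_n$ together with the representation $\Lambda^c(f,\mu)=\sup_{n}\frac1n L_n(f,\mu)$ coming from superadditivity and Lemma~\ref{le:Lambdalim}. The only difference is presentational—the paper unwinds the standard $\epsilon$-and-neighbourhood argument by hand, while you invoke the general principle that a supremum of continuous functions is lower semi-continuous, verified via open superlevel sets.
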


\proof Let $(f,\mu)\in\mathcal{S}$ and fix $\epsilon>0$ arbitrarily. From Lemma~\ref{le:Lambdalim}, we can take $n_0\geq 1$ large so that
$$\frac1{n_0}L_{n_0}(f,\mu)>\Lambda^c(f,\mu)-\epsilon.$$
Continuity of $L_{n_0}$ allow us choose thereafter a neighbourhood $\mathcal{N}$ of $(f,\mu)$ in $\mathcal{S}$, small enough for
$$\frac1{n_0}L_{n_0}(g,\mu_g)>\Lambda^c(f,\mu)-\epsilon$$
to hold for any pair $(g,\mu_g)\in\mathcal{N}$. Again from Lemma~\ref{le:Lambdalim}, we have
$$\Lambda^c(g,\mu_g)=\lim \frac{1}{n}L_n(g,\mu_g)=\sup_{n} \frac{1}{n}L_n(g,\mu_g)\geq\frac1{n_0}L_{n_0}(g,\mu_g)\geq\Lambda^c(f,\mu)-\epsilon.$$
which proves the proposition.
\endproof

The next proposition corresponds to the statement in Theorem~\ref{mteo:B}.

\begin{prop}\label{prop:meaberto}
 The class of  mostly expanding partially hyperbolic diffeomorphisms constitutes a $C^r$-open subset of ${\rm Diff}^r(M)$, $r>1$.
\end{prop}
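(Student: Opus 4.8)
The plan is to combine the two ingredients already prepared: the characterization in Lemma~\ref{le:equivpositivo} that $f$ is mostly expanding precisely when $\Lambda^c(f,\mu)>0$ for every $\mu\in\mathcal{G}^u(f)$, and the lower semi-continuity of $\Lambda^c$ on $\mathcal{S}$ established just above. The only additional inputs are the ones recalled in Section~\ref{sec:preliminaries}: that $\mathcal{M}(M)$ is weak* compact, that partial hyperbolicity is a $C^r$-open condition, and that the graph $\mathcal{G}^u(\mathcal{U},M)$ of Gibbs $u$-states over a small neighborhood $\mathcal{U}$ of $f$ is closed in $\mathcal{U}\times\mathcal{M}(M)$.

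I would argue by contradiction. Suppose no $C^r$-neighborhood of $f$ consists entirely of mostly expanding diffeomorphisms. Shrinking to a neighborhood $\mathcal{U}$ on which every map is partially hyperbolic, I obtain a sequence $g_k\to f$ in $\mathcal{U}$ together with Gibbs $u$-states $\mu_k\in\mathcal{G}^u(g_k)$ such that $\Lambda^c(g_k,\mu_k)\leq 0$ for every $k$, using Lemma~\ref{le:equivpositivo} to translate the failure of the mostly expanding property into non-positivity of $\Lambda^c$ on some Gibbs $u$-state. By weak* compactness of $\mathcal{M}(M)$, after passing to a subsequence $\mu_k\to\mu$, so that $(g_k,\mu_k)\to(f,\mu)$ in $\mathcal{S}$. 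Closedness of the graph $\mathcal{G}^u(\mathcal{U},M)$ then forces $\mu\in\mathcal{G}^u(f)$.

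To close the argument I would invoke lower semi-continuity of $\Lambda^c$ at $(f,\mu)$, which yields $\Lambda^c(f,\mu)\leq\liminf_k\Lambda^c(g_k,\mu_k)\leq 0$. Since $\mu$ is a Gibbs $u$-state of the mostly expanding map $f$, Lemma~\ref{le:equivpositivo} gives $\Lambda^c(f,\mu)>0$, a contradiction. Hence some $C^r$-neighborhood of $f$ lies in the mostly expanding class. I note that the same mechanism could instead be phrased positively: compactness of $\mathcal{G}^u(f)$ together with lower semi-continuity gives a uniform gap $\Lambda^c(f,\mu)\geq\delta>0$ on $\mathcal{G}^u(f)$, but this sharper bound is not actually needed, since pointwise positivity already collides with the limit measure above.

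I do not expect a genuine computational obstacle; the force of the argument has been absorbed into the preparatory results, so the difficulty is really concentrated in the two facts imported from Section~\ref{sec:preliminaries}, namely the closedness of the Gibbs $u$-state graph under $C^r$-perturbation and the lower semi-continuity of $\Lambda^c$. The one point that requires care is the direction of the inequalities: lower semi-continuity supplies only $\liminf_k\Lambda^c(g_k,\mu_k)\geq\Lambda^c(f,\mu)$, so the contradiction must be arranged so that this lower bound is incompatible with the sign condition $\Lambda^c(g_k,\mu_k)\leq 0$. A secondary subtlety is to keep the perturbations $g_k$ partially hyperbolic, so that $\mathcal{G}^u(g_k)$ and $\Lambda^c(g_k,\cdot)$ are defined at all; this is precisely what the openness of partial hyperbolicity guarantees.
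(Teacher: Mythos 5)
Your proposal is correct and follows essentially the same route as the paper's own proof: a contradiction argument using a sequence $g_k\to f$ with Gibbs $u$-states $\mu_k$ satisfying $\Lambda^c(g_k,\mu_k)\leq 0$, weak* compactness to extract a limit $\mu$, closedness of the Gibbs $u$-state graph (Remark 11.15 of \cite{BDV2005}) to conclude $\mu\in\mathcal{G}^u(f)$, and lower semi-continuity of $\Lambda^c$ together with Lemma~\ref{le:equivpositivo} to reach the contradiction $0\geq\Lambda^c(f,\mu)>0$. Your added remarks on the direction of the inequalities and on keeping the perturbations partially hyperbolic are exactly the points the paper handles implicitly.
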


\proof Let ${\rm Diff}^r(M)$, $r>1$, be a mostly expanding partially hyperbolic diffeomorphism. Since partial hyperbolicity is a $C^1$-open property (hence, it is $C^r$-open, for every $r\geq 1$), we need to prove the existence of a small neighborhood where every partial hyperbolic diffeomorphism  is mostly expanding.  We argue by contradiction. Assume that there is a sequence $(f_n)_{n\geq1}$ of $C^r$-diffeomorphisms converging to $f$ in the $C^r$-topology, $r>1$, such that, for every $f_n$, $n\geq1$, there is a Gibbs $u$-state $\mu_n$ such that $\Lambda^c(f_n,\mu_n)\leq 0$.

Taking a subsequence if it is necessary, we may assume that $\mu_n$ converges to $\mu$. As already pointed out (see \cite[Remark 11.15]{BDV2005}), $\mu$ is a Gibbs $u$-state for $f$. Since $f$ is  mostly expanding, then $\Lambda^c(f,\mu)>0$. By the lower semicontinuity of $\Lambda^c$, we have
$$0\geq \liminf_{n\to+\infty}\Lambda^c(f_n,\mu_n)\geq\Lambda^c(f,\mu)>0,$$
which is a contradiction.
\endproof


\section[Proof of Theorems~\ref{mteo:C} and~\ref{mteo:D}]{Proof of Theorem~\ref{mteo:C} and Theorem~\ref{mteo:D}}\label{sec:demteosCD}

The way we prove Theorem~\ref{mteo:C} is to prove that if $f$ is  mostly
expanding , then it has an iterate that satisfies the weakly expanding condition (\ref{eq:wNUE}) along the center direction. We do this in several steps. Thereafter we use the weakly expanding condition to show that $f$ has a finite number of physical measures whose basins cover Lebesgue almost every point in the ambient manifold.

\begin{lema}\label{le:Nunif}
Let $f\in{\rm Diff}^r(M)$, $r>1$, be  partially hyperbolic and mostly expanding. Then there exists an integer $n_0\geq 1$ such that

\begin{equation}\label{eq:Nunif}
\int\log{\rm m}(Df^{n_0}|E^c_{x})\: d\mu(x)>0,
\end{equation}
for every Gibbs $u$-state $\mu$ of $f$.
\end{lema}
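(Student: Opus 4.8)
The plan is to produce a single exponent $n_0$ that works simultaneously for the whole set of Gibbs $u$-states, and the mechanism for this uniformity will be the compactness of $\mathcal{G}^u(f)$ combined with the superadditivity already recorded in \eqref{eq:supaddL}. First I would record the pointwise (in $\mu$) statement. Since $f$ is mostly expanding, Lemma~\ref{le:equivpositivo} gives $\Lambda^c(f,\mu)>0$ for every $\mu\in\mathcal{G}^u(f)$. By Lemma~\ref{le:Lambdalim} together with the supremum formula preceding it,
$$\Lambda^c(f,\mu)=\lim_{n\to+\infty}\frac1n L_n(f,\mu)=\sup_{n\in\mathbb{N}}\frac1n L_n(f,\mu)>0,$$
so for each fixed $\mu\in\mathcal{G}^u(f)$ there exists an integer $n(\mu)\geq1$ with $L_{n(\mu)}(f,\mu)>0$.

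Next I would upgrade this to a local statement and then pass to a finite selection. Because $L_{n(\mu)}$ is continuous on $\mathcal{S}$, in particular continuous in the second variable with $f$ held fixed, the strict inequality $L_{n(\mu)}(f,\mu)>0$ persists on a weak* open neighbourhood $V_\mu$ of $\mu$ in $\mathcal{M}(M)$; that is, $L_{n(\mu)}(f,\nu)>0$ for every $\nu\in V_\mu$. The family $\{V_\mu\}_{\mu\in\mathcal{G}^u(f)}$ is an open cover of $\mathcal{G}^u(f)$, and since $\mathcal{G}^u(f)$ is compact (it is weak* closed in $\mathcal{M}(f)$), I extract a finite subcover $V_{\mu_1},\dots,V_{\mu_k}$ with associated integers $n_1,\dots,n_k$, so that for every $\mu\in\mathcal{G}^u(f)$ one has $L_{n_i}(f,\mu)>0$ for at least one index $i$.

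Finally I would merge the finitely many exponents into one by exploiting superadditivity. Set $n_0=\operatorname{lcm}(n_1,\dots,n_k)$. Iterating \eqref{eq:supaddL} gives $L_{m\,n}(f,\mu)\geq m\,L_n(f,\mu)$ for all $m,n\geq1$; hence, writing $n_0=q_i n_i$, any $\mu\in\mathcal{G}^u(f)$ lying in $V_{\mu_i}$ satisfies
$$L_{n_0}(f,\mu)=L_{q_i n_i}(f,\mu)\geq q_i\,L_{n_i}(f,\mu)>0.$$
Since $L_{n_0}(f,\mu)=\int\log{\rm m}(Df^{n_0}|E^c_x)\,d\mu(x)$, this is exactly \eqref{eq:Nunif}, valid for every Gibbs $u$-state $\mu$. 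I expect the only genuine difficulty to be the uniformity itself: the pointwise inequality $\Lambda^c(f,\mu)>0$ alone does not bound the required $n(\mu)$, and the resolution is precisely the compactness of $\mathcal{G}^u(f)$ to reduce to finitely many exponents, followed by the superadditivity trick that converts a common multiple into a single working $n_0$.
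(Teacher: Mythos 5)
Your proposal is correct and follows essentially the same route as the paper's proof: positivity of $\Lambda^c(f,\mu)=\sup_n\frac1n L_n(f,\mu)$ from Lemmas~\ref{le:equivpositivo} and~\ref{le:Lambdalim}, continuity of the $L_n$ to get an open cover of the compact set $\mathcal{G}^u(f)$, a finite subcover, and superadditivity to pass to a common multiple of the finitely many exponents. The only cosmetic differences are that the paper covers $\mathcal{G}^u(f)$ by the sets $G_n=\{\mu: L_n(f,\mu)>0\}$ rather than by neighbourhoods $V_\mu$, and takes $n_0$ to be the product $n_1\cdots n_k$ instead of the least common multiple; both choices serve the same purpose.
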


\proof  For any $n\geq 1$, consider the set
\begin{equation}
G_n=\{\mu\in\mathcal{G}^u(f)\::\: L_n(f,\mu) >0\}.
\end{equation}
Because each  $L_n$ is continuous, $n\geq 1$, we have  $G_n$ is  open in $\mathcal{G}^u(f)$. Since $f$ is  mostly expanding,  it follows that, given any $\mu \in \mathcal{G}^u(f)$, there exists $n \geq 0$ such that $\mu \in G_n$ (see Lemma~\ref{le:equivpositivo} and Lemma~\ref{le:Lambdalim} ). Hence the family $\{G_n\}_{n\in\mathbb{N}}$ is an open covering of $\mathcal{G}^u(f)$ and by compactness,  there exists integers $n_1,\dots, n_k$ such that $\mathcal{G}^u(f)=\cup_{j=1}^k G_{n_j}$.

Let  $n_0:=n_1\cdots n_k$. Since the sequence $(L_n(f,\mu))_n$ is super additive, we have 
\begin{equation}\label{subadditive}
L_{rs}(f,\mu)\geq rL_s(f,\mu), \quad \mbox{ for every integers }r,s\geq 1.
\end{equation}
If $\mu \in \mathcal{G}^u(f)$, then $\mu \in G_{n_j}$ for some $1\leq j \leq k$. Taking $s=n_j$ and $r= n_0/n_j$ in (\ref{subadditive}) proves that $L_{n_0}(\mu)>0$ for every $\mu \in \mathcal{G}^u(f)$.
\endproof

Note that for $n_0 \geq 1$, we always have $\mathcal{G}^u(f) \subseteq \mathcal{G}^u(f^{n_0})$, and it may be that $\mathcal{G}^u(f)$ is a proper subset of $\mathcal{G}^u(f^{n_0})$. Lemma \ref{le:Nunif} states that (\ref{eq:Nunif}) holds for every $\mu \in \mathcal{G}^u(f)$. We do not know whether it holds for every $\mu \in \mathcal{G}^u(f^{n_0})$.

\begin{lema}\label{wNUE on H}
Let $f\in{\rm Diff}^r(M)$, $r>1$, be mostly expanding. Then, there exists an integer $n_0\geq 1$ such that $f^{n_0}$ satisfies the wNUE-condition on a set $H\subseteq M$. 

Moreover, $H \cup f^{-1}(H) \cup \ldots \cup f^{-(n_0-1)}(H)$ has full Lebesgue measure in $M$.
\end{lema}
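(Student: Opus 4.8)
The plan is to take $g=f^{n_0}$ for a suitably large $n_0$ and verify Definition~\ref{defi: wNUEcond} for $g$ directly from the uniform positivity supplied by Lemma~\ref{le:Nunif}, transferring the conclusion from the orbit structure of $f$ to that of $g$ by an averaging argument that simultaneously produces the ``moreover'' clause. First I would fix the integrand: for $y\in M$ set $\Phi(y)=\log\|Dg^{-1}|E^{cu}_y\|$. Since $E^{cu}$ is $Dg$-invariant and $\|A^{-1}\|=1/{\rm m}(A)$, one has $\Phi(y)=-\log{\rm m}(Dg|E^{cu}_{g^{-1}(y)})$, so for every $g$-invariant $\mu$ (in particular every $\mu\in\mathcal{G}^u(f)$, which is $f$- hence $g$-invariant) one gets $\int\Phi\,d\mu=-\int\log{\rm m}(Dg|E^{cu})\,d\mu$. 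The wNUE-condition for $g$ at a point $z$ is precisely $\liminf_n\frac1n\sum_{j=0}^{n-1}\Phi(g^j(z))\leq -c_0$.

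The core estimate I would establish is a uniform lower bound $\int\log{\rm m}(Dg|E^{cu})\,d\mu\geq 2c_0>0$ for all $\mu\in\mathcal{G}^u(f)$. Two ingredients enter. First, a domination estimate: the continuous splitting forces the angle between $E^c$ and $E^u$ to be bounded below by some $\theta_0>0$, and for $n_0$ large \eqref{PH2}--\eqref{PH3} give ${\rm m}(Df^{n_0}|E^u)\geq{\rm m}(Df^{n_0}|E^c)$; decomposing a unit vector $w=w_c+w_u\in E^{cu}$ and using $\max(\|w_c\|,\|w_u\|)\geq\tfrac12$ then yields ${\rm m}(Df^{n_0}|E^{cu}_y)\geq\tfrac{\sin\theta_0}{2}\,{\rm m}(Df^{n_0}|E^c_y)$, hence $\log{\rm m}(Df^{n_0}|E^{cu})\geq\log{\rm m}(Df^{n_0}|E^c)+\log\tfrac{\sin\theta_0}{2}$. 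Second, Lemma~\ref{le:Nunif} with compactness of $\mathcal{G}^u(f)$ and continuity of $L_{n_0}$ gives a constant $c>0$ with $\int\log{\rm m}(Df^{n_0}|E^c)\,d\mu\geq c$ for all $\mu\in\mathcal{G}^u(f)$; replacing $n_0$ by a large multiple $kn_0$ and invoking the super-additivity \eqref{eq:supaddL} in the form $L_{kn_0}\geq kL_{n_0}$ makes $\int\log{\rm m}(Df^{kn_0}|E^c)\,d\mu\geq kc$ exceed the \emph{fixed} constant $|\log\tfrac{\sin\theta_0}{2}|$. Renaming this iterate $n_0$ and setting $g=f^{n_0}$, I obtain $\int\Phi\,d\mu\leq -2c_0$ for every $\mu\in\mathcal{G}^u(f)$.

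Finally I would transfer this to orbits and extract $H$. For $x$ in the set $E$ of Proposition~\ref{prop:uM6}, decompose the $f$-empirical measure along a block of length $n_0n$ as $\nu^f_{n_0n,x}=\frac1{n_0}\sum_{i=0}^{n_0-1}\sigma^g_{n,f^i(x)}$, where $\sigma^g_{n,z}=\frac1n\sum_{j=0}^{n-1}\delta_{g^j(z)}$. Along a subsequence $\nu^f_{n_0n_\ell,x}\to\mu\in\mathcal{G}^u(f)$, so integrating the continuous $\Phi$ gives $\frac1{n_0}\sum_{i=0}^{n_0-1}\frac1{n_\ell}\sum_{j=0}^{n_\ell-1}\Phi(g^jf^i(x))\to\int\Phi\,d\mu\leq -2c_0$. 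Passing to a further subsequence so each inner average converges, at least one index $i^\ast$ has limit $\leq -2c_0$, whence $\liminf_n\frac1n\sum_{j=0}^{n-1}\Phi(g^jf^{i^\ast}(x))\leq -2c_0\leq -c_0$; that is $f^{i^\ast}(x)\in H$, where $H:=\{z:\liminf_n\frac1n\sum_{j=0}^{n-1}\Phi(g^j(z))\leq -c_0\}$. Thus $E\subseteq\bigcup_{i=0}^{n_0-1}f^{-i}(H)$. Since the strong unstable foliation is absolutely continuous and $E$ is full in every unstable disk, $E$ (and therefore the union) has full Lebesgue measure in $M$, which is the ``moreover'' assertion; and because $f$ is a diffeomorphism this forces ${\rm Leb}(H)>0$, so $g=f^{n_0}$ satisfies the wNUE-condition on $H$.

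I expect the main obstacle to be precisely the mismatch between $\mathcal{G}^u(f)$ and $\mathcal{G}^u(f^{n_0})$ flagged in the remark after Lemma~\ref{le:Nunif}: a naive verification of wNUE through the empirical measures of $g$ itself would land on Gibbs $u$-states of $g$, over which we have no control. The averaging identity $\nu^f_{n_0n,x}=\frac1{n_0}\sum_i\sigma^g_{n,f^i(x)}$ is what circumvents this, trading ``wNUE holds at $x$ for $g$'' for the weaker but sufficient ``wNUE holds at some $f^i(x)$'', and it is exactly this trade that yields the full-measure union of preimages rather than full measure of $H$ itself. The only other delicate points are the domination estimate relating ${\rm m}(\cdot|E^{cu})$ to ${\rm m}(\cdot|E^c)$ and the bookkeeping ensuring the fixed angle constant is absorbed upon iterating.
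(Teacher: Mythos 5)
Your proposal is correct, and its skeleton matches the paper's proof: a uniform positive lower bound for $\int\log{\rm m}(Dg\vert E^{cu})\,d\mu$ over all $\mu\in\mathcal{G}^u(f)$, a transfer to Birkhoff averages via empirical measures on the full-measure set of Proposition~\ref{prop:uM6}, and a pigeonhole over the residues $0,\dots,n_0-1$ to place some $f^{\ell}(x)$ in $H$. The differences lie in three places, and two of them work in your favor. First, the paper sets $c_0=\inf_{\mu\in\mathcal{G}^u(f)}\int\log{\rm m}(Dg\vert E^{cu})\,d\mu$ and asserts $c_0>0$ ``according to Lemma~\ref{le:Nunif}'', although that lemma controls ${\rm m}(Dg\vert E^{c})$ rather than ${\rm m}(Dg\vert E^{cu})$; your angle/domination estimate ${\rm m}(Df^{n_0}\vert E^{cu})\geq\tfrac{\sin\theta_0}{2}\,{\rm m}(Df^{n_0}\vert E^{c})$, combined with super-additivity \eqref{eq:supaddL} to absorb the additive constant $\log\tfrac{\sin\theta_0}{2}$ by passing to a multiple $kn_0$, together with the explicit compactness-plus-continuity argument that makes the bound uniform, fills in exactly what the paper leaves implicit. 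Second, the paper applies Proposition~\ref{prop:uM6} to $g=f^{n_0}$, so its empirical measures accumulate on $\mathcal{G}^u(g)$, and it must then use the fact that $\tfrac{1}{n_0}(\nu+f_*\nu+\dots+f_*^{n_0-1}\nu)\in\mathcal{G}^u(f)$ for $\nu\in\mathcal{G}^u(g)$ to return to the set of measures it actually controls; you instead apply Proposition~\ref{prop:uM6} to $f$ itself and decompose the length-$n_0n$ $f$-empirical measure into an average of $g$-empirical measures (the same identity the paper uses, read in the opposite direction), which bypasses $\mathcal{G}^u(g)$ and the averaging fact entirely. Third, where the paper invokes its combinatorial Lemma~\ref{liminf of average} on limsups of averaged sequences, you pass to a sub-subsequence along which all $n_0$ inner averages converge and pigeonhole their limits; both yield the required index, but your version hands you a genuine limit along a subsequence, hence the $\liminf$ bound that the wNUE-condition asks for, without any separate lemma.
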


The proof of Lemma \ref{wNUE on H} makes use of an auxiliary result regarding $\limsup$ of general sequences.

\begin{lema}\label{liminf of average}
Let $\{a_n\}_{n\in\mathbb{N}}$ be a sequence of real numbers and $N\geq 1$ some  integer. Then
\begin{equation}\label{sum over N terms}
\limsup_{n \to \infty} \frac{1}{nN} \sum_{k=0}^{nN-1}  a_{k} 
\leq \max_{0\leq \ell \leq N-1} \limsup_{n \to \infty} \frac{1}{n} \sum_{k=0}^{n-1} a_{kN+\ell}.
\end{equation}
\end{lema}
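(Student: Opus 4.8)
The plan is to group the $nN$ summands $a_0,\dots,a_{nN-1}$ according to their residue modulo $N$. Writing each index uniquely as $k=jN+\ell$ with $0\le \ell\le N-1$ and $0\le j\le n-1$, I would first record the elementary identity
\[
\frac{1}{nN}\sum_{k=0}^{nN-1}a_k=\frac{1}{N}\sum_{\ell=0}^{N-1}\left(\frac{1}{n}\sum_{j=0}^{n-1}a_{jN+\ell}\right).
\]
The only thing to verify at this stage is that $(j,\ell)\mapsto jN+\ell$ is a bijection from $\{0,\dots,n-1\}\times\{0,\dots,N-1\}$ onto $\{0,\dots,nN-1\}$, which is immediate from Euclidean division: every $k$ in the target range has a unique such representation, and conversely each pair produces an index in $[0,nN-1]$.

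Next I would take $\limsup_{n\to\infty}$ on both sides. Setting $b^{(n)}_\ell=\frac{1}{n}\sum_{j=0}^{n-1}a_{jN+\ell}$, the right-hand side above is the average $\frac1N\sum_{\ell=0}^{N-1}b^{(n)}_\ell$ of finitely many sequences, so subadditivity of $\limsup$ gives
\[
\limsup_{n\to\infty}\frac{1}{nN}\sum_{k=0}^{nN-1}a_k\le\frac{1}{N}\sum_{\ell=0}^{N-1}\limsup_{n\to\infty}b^{(n)}_\ell.
\]
Since an average never exceeds its largest term, bounding each summand of the right-hand side by the maximum yields
\[
\frac{1}{N}\sum_{\ell=0}^{N-1}\limsup_{n\to\infty}b^{(n)}_\ell\le\max_{0\le\ell\le N-1}\limsup_{n\to\infty}\frac1n\sum_{j=0}^{n-1}a_{jN+\ell},
\]
which is precisely the claimed inequality \eqref{sum over N terms}.

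There is no genuine obstacle here beyond index bookkeeping; the one point that demands a word of care is the invocation of subadditivity of $\limsup$, which can fail only through an indeterminate $+\infty-\infty$. This is harmless: if the maximum on the right-hand side of \eqref{sum over N terms} equals $+\infty$ the inequality is trivially true, and otherwise every $\limsup_n b^{(n)}_\ell$ is bounded above (no term can be $+\infty$), so each sequence $b^{(n)}_\ell$ is eventually bounded above and subadditivity applies without ambiguity. In the intended application the numbers $a_k$ are values of $\log{\rm m}(Df^{-1}|E^{cu})$ and hence uniformly bounded, so this subtlety does not even arise.
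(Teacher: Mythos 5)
Your proof is correct and follows essentially the same route as the paper's: both rest on regrouping the sum $\sum_{k=0}^{nN-1}a_k$ by residue classes modulo $N$ and then bounding each class's Ces\`aro average by its $\limsup$, which is in turn bounded by the maximum. The only difference is cosmetic --- the paper unwinds the subadditivity of $\limsup$ explicitly via an $\epsilon$-and-$m_\ell$ argument (assuming the right-hand side finite, as you also do), whereas you invoke it as a standard fact and even treat the $\pm\infty$ edge cases more carefully.
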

\proof Let $A=\max_{0\leq \ell \leq N-1} \limsup_{n \to \infty} \frac{1}{n} \sum_{k=0}^{n-1} a_{kN+\ell}$. Assume that $A<+\infty$, otherwise there is nothing to prove. For each $\ell=0,\dots, N-1$, let
$$A_\ell=\limsup_{n \to \infty} \frac{1}{n} \sum_{k=0}^{n-1} a_{kN+\ell}.$$
Fix $\epsilon>0$. For every $\ell=0,\dots, N-1$, there exist $m_\ell\geq 1$ such that 
$$\frac{1}{n} \sum_{k=0}^{n-1} a_{kN+\ell}<A_\ell+\epsilon,\qquad \mbox{ for every } n\geq m_\ell.$$
Let $m:=\max\{m_0,\dots,m_{N-1}\}$. Then 
$$\frac{1}{n} \sum_{k=0}^{n-1} a_{kN+\ell}<A_\ell+\epsilon,\qquad \mbox{ for every } n\geq m,\mbox{ and } \ell=0,\dots N-1.$$
Hence,
$$
\frac{1}{n}\sum_{\ell=0}^{N-1}\sum_{k=0}^{n-1}a_{kN+\ell}<\sum_{\ell=0}^{N-1}A_\ell+\epsilon\leq N(A+\epsilon)
$$
and
$$\frac{1}{nN} \sum_{k=0}^{nN-1}  a_{k}\leq A+\epsilon, \qquad \mbox{for every } n\geq m,$$
which proves the lemma.
\endproof

We need to remark that it is not possible to change $\limsup$ to $\liminf$ in Lemma~\ref{liminf of average}. It is due to this limitation that we are only able to prove \emph{a priori} that an iterate of  $f$, $g=f^{n_0}$ satisfies the wNUE-condition on the set of positive Lebesgue measure $H$. Of course, as consequence of the existence of physical measure for $g=f^{n_0}$, \emph{a posteriori} $g$ satisfies also the NUE-condition on $H$. In fact, it follows from Proposition~\ref{prop:ADLP2014} that $H$ contains the basin of the physical measures of $g$, so we can change the $\limsup$ to $\liminf$ on points belonging to one of such basins. 

\begin{proof}[Proof of Lemma \ref{wNUE on H}] Recall that if $f$ is partially hyperbolic, we denote by $E^{cu}=E^c\oplus E^u$. Let $n_0$ be as in Lemma \ref{le:Nunif} and write $g = f^{n_0}$. Let
\begin{equation}\label{eq:consthyp}
c_0:=\inf_{\mu \in \mathcal{G}^u(f)} \int \log {\rm m}(Dg \vert E^{cu}) \ d\mu. 
\end{equation}
Then $c_0>0$ according to Lemma \ref{le:Nunif}. Let $E(g)$ be the set of points $x \in M$ such that every accumulation point of the measure
\begin{equation}
\nu_{x,n} = \frac{1}{n} \sum_{k=0}^{n-1}\delta_{g^k(x)}
\end{equation}
belongs to $\mathcal{G}^u(g)$. Then $E(g)$ has full Lebesgue measure in $M$ according to Proposition \ref{prop:uM6}. Note that if $x\in E(g)$ then, $\nu_{x,n}$ accumulates on $\mu\in \mathcal{G}^u(g)$ if and only if $\nu_{f(x),n}$ accumulates on $f_* \mu\in \mathcal{G}^u(g)$. So $E(g)$ is $f$-invariant. Note also that if $\nu \in \mathcal{G}^u(g)$, then 
$$\frac{1}{n_0}(\nu + f_* \nu + \ldots + f_*^{n_0-1}\nu) \in \mathcal{G}^u(f).$$ 

\noindent In particular, every accumulation point $\tilde\mu$ of 
\begin{equation}
\mu_{x,n} = \frac{1}{n_0} \sum_{\ell=0}^{n_0-1} \nu_{f^\ell(x),n}
\end{equation}
is a Gibbs $u$-state for $f$.

Fix some $x \in E(g)$ and for any integer $n\geq 1$ set 
\begin{equation}
a_n = \log {\rm m}(Dg \vert E^{cu}_{f^n(x)}).
\end{equation}
For every $0\leq \ell \leq n_0-1$ fixed we have
\begin{eqnarray*}\label{a_n is equal to integral}
 \int \log {\rm m}(Dg \vert E^{cu}_x)  \ d\nu_{f^\ell (x), n}
 &=&\frac{1}{n} \sum_{k=0}^{n-1} \log {\rm m}(Dg \vert E^{cu}_{g^{k}(f^\ell (x))})\\
 &=&\frac{1}{n} \sum_{k=0}^{n-1} \log {\rm m}(Dg \vert E^{cu}_{f^{kn_0+\ell}(x)})\\
 &=&\frac{1}{n} \sum_{k=0}^{n-1} a_{kn_0+\ell}.
\end{eqnarray*}
And so
\begin{eqnarray*}
\int \log {\rm m}(Dg \vert E^{cu}_x)  \ d\mu_{x,n}&=&\frac{1}{n_0}\sum_{\ell=0}^{n_0-1}\int \log {\rm m}(Dg \vert E^{cu}_x)\ d\nu_{f^\ell (x), n}\\ &=&\frac{1}{n_0}\sum_{\ell=0}^{n_0-1}\frac{1}{n} \sum_{k=0}^{n-1} a_{kn_0+\ell}. 
\end{eqnarray*}

For every sufficiently large $n\geq 1$ we must have 
\begin{eqnarray*}
\frac{1}{nn_0}\sum_{\ell=0}^{n_0-1} \sum_{k=0}^{n-1} a_{kn_0+\ell} &=& \int \log {\rm m}(Dg \vert E_x^{cu}) \ d\mu_{x,n}\\
&>& \frac 12 \inf_{\mu \in \mathcal{G}^u(f)} \int \log {\rm m}(Dg \vert E^{cu})d\mu  = c_0/2.
\end{eqnarray*}
In particular,  
\begin{equation}
\limsup_{n \to \infty} \frac{1}{nn_0}\sum_{k=0}^{n-1}\sum_{\ell=0}^{n_0-1} a_{kn_0+\ell}
\geq c_0/2.
\end{equation}
Hence, applying Lemma \ref{liminf of average} with $N=n_0$, we conclude that there exists some $0\leq \ell \leq n_0-1$ such that
\begin{equation}
\limsup_{n \to \infty} \frac{1}{n} \sum_{k=0}^{n-1} a_{kn_0+\ell} \geq c_0/2.
\end{equation} 
We have now proved that $f^\ell(x) \in H$ for some $0 \leq \ell n_0-1$. Since $x \in E$ was chosen arbitrarily, this implies that $E = H \cup f^{-1}(H) \cup \ldots \cup f^{-(n_0-1)}(H)$. To finish the proof, note that   $H$  corresponds to the set of points $x\in M$ for which
$$\limsup_{n \to \infty} \frac{1}{n} \sum_{k=0}^{n-1} \log {\rm m}(Dg \vert E_{g^k(x)}^{cu}) =\limsup_{n \to \infty} \frac{1}{n} \sum_{k=0}^{n-1} \log {\rm m}(Dg \vert E_{f^{n_0k}(x)}^{cu}) \geq c_0/2>0.$$ 
which is equivalent to
\begin{equation}
\liminf_{n \to \infty} \frac{1}{n} \sum_{k=0}^{n-1} \log \| (Dg \vert E_{g^k(x)}^{cu})^{-1}\| \leq -c_0/2<0.
\end{equation}
so $g=f^{n_0}$ satisfies the wNUE-condition on $H$.
\end{proof}

Next lemma allows us to  conclude Theorem~\ref{mteo:C}.

\begin{lema}\label{le:existecumedida} 
Let $f\in{\rm Diff}^r(M)$, $r>1$, be partially hyperbolic and  mostly expanding. Then there
exist finitely many ergodic Gibbs $cu$-states $\mu_1,\dots\mu_\ell$  all of which are physical measures. The union of their basins have full Lebesgue measure in $M$.
\end{lema}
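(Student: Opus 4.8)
The plan is to reduce, by means of Lemma~\ref{wNUE on H}, to an iterate $g=f^{n_0}$, to apply the theorem of Alves, Dias, Luzzatto and Pinheiro (Proposition~\ref{prop:ADLP2014}) to $g$, and then to transport the resulting physical measures from $g$ back to $f$. By Lemma~\ref{wNUE on H} there is $n_0\geq 1$ such that $g=f^{n_0}$, which is again $C^r$ partially hyperbolic with the same splitting $TM=E^s\oplus E^{cu}$, satisfies the wNUE-condition on a set $H$ with $E:=H\cup f^{-1}(H)\cup\cdots\cup f^{-(n_0-1)}(H)$ of full Lebesgue measure; in particular ${\rm Leb\:}(H)>0$. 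Applying Proposition~\ref{prop:ADLP2014} to $g$ then yields finitely many ergodic Gibbs $cu$-states $\nu_1,\dots,\nu_m$ for $g$, each a physical measure for $g$ with basin open modulo a null set, such that $\bigcup_i \mathcal{B}_g(\nu_i)$ covers ${\rm Leb\:}$-almost every point of $H$ (here and below $\mathcal{B}_g$, resp. $\mathcal{B}_f$, denotes the basin with respect to $g$, resp. $f$).

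Next I would transport these measures to $f$. For each $i$ set $\bar\nu_i=\frac{1}{n_0}\sum_{j=0}^{n_0-1}f_*^j\nu_i$; grouping the $\nu_i$ into their (finitely many) $f_*$-orbits produces distinct measures $\mu_1,\dots,\mu_\ell$. Each $\mu_k$ is $f$-invariant and ergodic for $f$, by the standard description of the ergodic components of an $f$-measure under a power. Each $\mu_k$ is moreover a Gibbs $cu$-state for $f$: since $f^\ell$ commutes with $g$, every $f_*^j\nu_i$ is again a $g$-Gibbs $cu$-state, so $\bar\nu_i$ is one by convexity of the set of Gibbs $cu$-states \cite{V2007}; being in addition $f$-invariant and built over the same unstable Pesin manifolds (those of $f$ and of $g$ coincide, and positivity of the relevant Lyapunov exponents is insensitive to the power $n_0$), it is a Gibbs $cu$-state for $f$. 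The decisive point is the basin inclusion $\mathcal{B}_g(\nu_i)\subseteq\mathcal{B}_f(\bar\nu_i)$: writing the Birkhoff time $m=kn_0+\ell$ with $0\le\ell<n_0$ and using $f^m(x)=f^\ell(g^k(x))$, for $x\in\mathcal{B}_g(\nu_i)$ and any $\varphi\in C^0(M)$ one obtains
$$\frac{1}{N}\sum_{m=0}^{N-1}\varphi(f^m(x))\longrightarrow \frac{1}{n_0}\sum_{\ell=0}^{n_0-1}\int \varphi\circ f^\ell\,d\nu_i=\int\varphi\,d\bar\nu_i.$$
In particular ${\rm Leb\:}(\mathcal{B}_f(\mu_k))>0$, so each $\mu_k$ is physical for $f$.

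Finally I would upgrade the covering from $H$ to all of $M$. Put $B=\bigcup_k \mathcal{B}_f(\mu_k)$. Basins of $f$ are $f$-invariant sets, so $B$ is $f$-invariant, and by the previous step $\bigcup_i\mathcal{B}_g(\nu_i)\subseteq B$, whence ${\rm Leb\:}(H\setminus B)=0$. For each $0\le k\le n_0-1$ we have $f^{-k}(H)\setminus B=f^{-k}(H)\setminus f^{-k}(B)=f^{-k}(H\setminus B)$, which is null because $f^{-k}$ is a diffeomorphism; hence $B$ has full measure in each $f^{-k}(H)$, therefore in $E$, and therefore in $M$. This produces finitely many ergodic Gibbs $cu$-states $\mu_1,\dots,\mu_\ell$, all physical measures, whose basins cover Lebesgue-almost every point of $M$.

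The main obstacle I anticipate is the passage from $g=f^{n_0}$ to $f$: one must check simultaneously that the $f_*$-averaged measures remain ergodic Gibbs $cu$-states for $f$ (which requires that unstable Pesin manifolds and the absolute continuity of conditional measures are preserved under passing to a power, together with convexity of the class of Gibbs $cu$-states), and that the basin inclusion holds, so that both physicality and the full-measure covering descend to $f$. The measure-theoretic bookkeeping that turns ``covers almost every point of $H$'' into ``covers almost every point of $M$'' rests on the $f$-invariance of basins and on the full-measure conclusion of Lemma~\ref{wNUE on H}.
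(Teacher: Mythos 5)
Your proposal is correct and follows essentially the same route as the paper's proof: reduce to $g=f^{n_0}$ via Lemma~\ref{wNUE on H}, apply Proposition~\ref{prop:ADLP2014} to $g$, average the resulting ergodic Gibbs $cu$-states over $f_*$-iterates to obtain Gibbs $cu$-states for $f$, and use invariance of basins to upgrade the almost-everywhere covering from $H$ to all of $M$. You merely fill in details the paper leaves implicit (the $f$-ergodicity of the averaged measures, the basin inclusion $\mathcal{B}_g(\nu_i)\subseteq\mathcal{B}_f(\bar\nu_i)$, and the bookkeeping with $f^{-k}(H)$), so there is nothing to correct.
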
 

\proof  From Lemma~\ref{wNUE on H}, there exists an integer $n_0\geq 1$ such that $f^{n_0}$ satisfies the wNUE-condition on a set $H\subseteq M$. So we can apply Proposition~\ref{prop:ADLP2014} to $g=f^{n_0}$ and then we conclude that exist finitely many ergodic Gibbs $cu$-states  (of $g$) 
$\nu_1, ..., \nu_{\ell}$, whose basins have nonempty interior and such that 

$${\rm Leb}(H\setminus\cup_{j=1}^\ell\mathcal{B}(\nu_j))=0.$$

As we noted above, if  $\nu_j$ is a Gibbs $cu$-state for $g=f^{n_0}$, $j\in\{1,\dots,\ell\}$, then 
$$\mu_j:=\frac{1}{n_0}(\nu_j + f_* \nu + \ldots + f_*^{n_0-1}\nu_j)$$ 
is a Gibbs $cu$-state for $f$. Of course each $ \mathcal{B}(\mu_j)$ is contained in $H \cup f^{-1}(H) \cup \ldots \cup f^{-(n_0-1)}(H)$ and
$${\rm Leb}\left(\cup_{k=0}^{n_0-1}f^{-k}(H)\setminus\cup_{j=1}^\ell  \mathcal{B}(\mu_j)\right)=0.$$
Since $\cup_{k=0}^{n_0-1}f^{-k}(H)$ has full Lebesgue measure in $M$ (see Lemma~\ref{wNUE on H}), then almost  every $x \in M$ we have $x\in \mathcal{B}(\mu_j)$ for some $1 \leq j \leq \ell$.

\endproof

The following remark is a key ingredient in the proof of Theorem~\ref{mteo:D}.

\begin{lema}\label{le:openbasin} 
Let $f\in{\rm Diff}^r(M)$, $r>1$, be partially hyperbolic and mostly expanding. Let $\mu$  be a physical measures for $f$. Then its basin is open in $M$ up to a zero Lebesgue measure subset.
\end{lema}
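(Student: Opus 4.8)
The plan is to transfer the ``open up to a null set'' property from the iterate $g=f^{n_0}$, where it is already available, to $f$ itself. Let $n_0$, $g=f^{n_0}$ and $H$ be as in Lemma~\ref{wNUE on H}, and apply Proposition~\ref{prop:ADLP2014} to $g$: this yields finitely many ergodic Gibbs $cu$-states $\nu_1,\dots,\nu_\ell$ of $g$ whose basins $\mathcal{B}_g(\nu_i)$ (computed with respect to $g$) are open up to a zero Lebesgue measure set and cover $H$ up to a zero Lebesgue measure set. By Lemma~\ref{le:existecumedida} the physical measures of $f$ are the symmetrizations $\mu_i=\frac{1}{n_0}(\nu_i+f_*\nu_i+\dots+f_*^{n_0-1}\nu_i)$, and since their basins already have full Lebesgue measure while basins of distinct measures are disjoint, the given $\mu$ must coincide with one of them. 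It therefore suffices to show that each $\mathcal{B}(\mu_i)$ (the basin with respect to $f$) agrees, modulo a null set, with a finite union of sets that are open up to a null set.

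First I would record the elementary transfer identity $f(\mathcal{B}_g(\nu))=\mathcal{B}_g(f_*\nu)$, valid because $f$ commutes with $g$ and $f_*$ is a weak$^*$ homeomorphism of $\mathcal{M}(M)$. Fixing $z\in\mathcal{B}_g(\nu_i)$, for each $0\le\ell\le n_0-1$ we then have $f^\ell(z)\in\mathcal{B}_g(f_*^\ell\nu_i)$. Splitting a Birkhoff sum of a continuous test function $\varphi$ into residue classes modulo $n_0$ gives
\[
\frac{1}{Nn_0}\sum_{k=0}^{Nn_0-1}\varphi(f^k z)=\frac{1}{n_0}\sum_{\ell=0}^{n_0-1}\frac{1}{N}\sum_{j=0}^{N-1}\varphi\big(g^j(f^\ell z)\big),
\]
and letting $N\to\infty$ each inner average converges to $\int\varphi\,d(f_*^\ell\nu_i)$, so the whole expression tends to $\int\varphi\,d\mu_i$; the values of $n$ that are not multiples of $n_0$ are absorbed using the boundedness of $\varphi$. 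Hence $\mathcal{B}_g(\nu_i)\subseteq\mathcal{B}(\mu_i)$. Since the $f$-basin $\mathcal{B}(\mu_i)$ is insensitive to finitely many initial iterates, it is fully $f$-invariant, so in fact $f^{-k}\big(\mathcal{B}_g(\nu_i)\big)\subseteq\mathcal{B}(\mu_i)$ for every $k$.

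Now set $U_i:=\bigcup_{k=0}^{n_0-1}f^{-k}\big(\mathcal{B}_g(\nu_i)\big)$ and, for a physical measure $\mu$ of $f$, let $U(\mu):=\bigcup_{i:\,\mu_i=\mu}U_i$. Each $f^{-k}(\mathcal{B}_g(\nu_i))$ is a continuous preimage of a set that is open up to a null set, hence is again open up to a null set (preimages of null sets under the diffeomorphism $f$ are null), so $U(\mu)$ is open up to a null set and, by the previous paragraph, $U(\mu)\subseteq\mathcal{B}(\mu)$. Because $H\subseteq\bigcup_i\mathcal{B}_g(\nu_i)$ modulo a null set and $\bigcup_{k=0}^{n_0-1}f^{-k}(H)$ has full measure (Lemma~\ref{wNUE on H}), the union $\bigcup_\mu U(\mu)$ over the finitely many distinct physical measures has full Lebesgue measure. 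Finally, the basins $\mathcal{B}(\mu)$ are pairwise disjoint, so any point of $\mathcal{B}(\mu)\setminus U(\mu)$ lies outside every $U(\mu')$ and hence in the null complement of $\bigcup_{\mu'}U(\mu')$; thus $\mathcal{B}(\mu)=U(\mu)$ up to a null set and is therefore open up to a null set.

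The main obstacle I anticipate is precisely this passage from $g$ to $f$: one cannot hope for a pointwise set identity between $\mathcal{B}(\mu)$ and a union of $g$-basins, and the ``up to a null set'' bookkeeping has to be closed off globally, using the disjointness of physical basins together with the full-measure covering, rather than locally at each point. The Birkhoff-sum splitting into residue classes modulo $n_0$ is the computational core but is routine once the transfer identity $f(\mathcal{B}_g(\nu))=\mathcal{B}_g(f_*\nu)$ is in place.
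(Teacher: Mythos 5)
Your proof is correct, but it follows a genuinely different route from the paper's. The paper proves the lemma directly from the fact that $\mu$ is a Gibbs $cu$-state for $f$ itself: $\mu$-almost every point lies on an unstable leaf $F$ in which Lebesgue-almost every point belongs to $\mathcal{B}(\mu)$; saturating such leaves by strong stable manifolds produces open sets $\mathcal{A}(F)$ in which, by absolute continuity of the strong stable foliation, $\mathcal{B}(\mu)$ has full Lebesgue measure; the union $\mathcal{A}$ of these sets is open and invariant, and since every point of $\mathcal{B}(\mu)$ has an iterate entering $\mathcal{A}$, invariance forces $\mathcal{B}(\mu)\subseteq\mathcal{A}$, giving the conclusion. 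This is a Hopf-type argument whose payoff is an explicit $s$-saturated open invariant set containing the basin, in which the basin has full volume. Your argument instead stays at the level of Proposition~\ref{prop:ADLP2014} applied to the iterate $g=f^{n_0}$ and transfers its conclusion to $f$: the commutation identity $f(\mathcal{B}_g(\nu))=\mathcal{B}_g(f_*\nu)$, the residue-class splitting of Birkhoff sums showing $\mathcal{B}_g(\nu_i)\subseteq\mathcal{B}(\mu_i)$, and the global bookkeeping (disjointness of $f$-basins plus the full-measure covering by the sets $U(\mu)$) are all sound, and the step identifying the given $\mu$ with one of the $\mu_i$ via positive measure of its basin is correct. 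What your route buys is noteworthy: the paper's parenthetical claim that the lemma ``is contained in the second statement of Proposition~\ref{prop:ADLP2014}'' is not immediate, precisely because that proposition applies to $g$ and speaks of $g$-basins of the $\nu_j$, not $f$-basins of the $\mu_j$; your proof makes that reduction rigorous, using only soft measure-theoretic facts and no direct appeal to absolute continuity or Pesin theory. What you lose relative to the paper is the geometric description of the open set (an $s$-saturated invariant neighborhood of $\operatorname{supp}\mu$), which is what makes the paper's version self-contained and reusable, e.g.\ in the transitivity argument of Lemma~\ref{le:unique}.
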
 

\proof We prove the Lemma by completeness, even though it is contained in the second statement of Proposition~\ref{prop:ADLP2014} (see also \cite{ADLP2014}). Since $\mu$ is an ergodic, $\mu$-almost every point belongs to $\mathcal{B}(\mu)$. Moreover, since $\mu$ is a  Gibbs $cu$-state, $\mu$-almost every point lies in an unstable leaf $F$ on which ${\rm Leb}_F$-almost every point does also belong to $\mathcal{B}(\mu)$. More precisely, there exist a set $\mathcal{L}(\mu)$ of unstable leaves such that 
\begin{enumerate}
\item $\displaystyle \mu\left(\bigcup_{F\in \mathcal{L}(\mu)}F\right)=1.$
\item $\mathcal{B}(\mu)$ has full ${\rm Leb}_F$-measure for every $F\in  \mathcal{L}(\mu)$.
\end{enumerate}
For each $F \in \mathcal{L}(\mu)$, let $\mathcal{A}(F)$ be the $s$-saturated set  consisting of the union of all strong stable manifolds of points in $F$. Then $\mathcal{A}(F)$ is an open set and, by absolute continuity of the stable foliation, $\mathcal{B}(\mu)\cap \mathcal{A}(F)$ has full Lebesgue measure in $\mathcal{A}(F)$. Let $\mathcal{A} = \bigcup_{F \in \mathcal{L}(\mu)} \mathcal{A}(F)$.  Since $\mathcal{B}(\mu)$ has full Lebesgue measure in each $\mathcal{A}(F)$, it has full Lebesgue measure in $\mathcal{A}$. Note that $\mathcal{A}$ is a neighborhood of $\operatorname{supp} \mu$. Therefore, given  any point $x\in \mathcal{B}(\mu)$, there is an iterate $f^n(x)$ that belongs to $\mathcal{A}$. But $\mathcal{A}$ is invariant, so in fact we must have $x \in \mathcal{A}$. Hence $\mathcal{A}$ is an open set containing $\mathcal{B}(\mu)$, and $\mathcal{B}(\mu)$ has full Lebesgue measure in $\mathcal{A}$.

\endproof

Next Lemma implies Theorem~\ref{mteo:D}.

\begin{lema}\label{le:unique} 
Let $f\in{\rm Diff}^r(M)$, $r>1$, be  transitive, partially hyperbolic, and mostly expanding. Then $f$ has a unique  physical measure  whose basin has full Lebesgue measure in $M$.
\end{lema}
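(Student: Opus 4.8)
The plan is to prove uniqueness of the physical measure for a transitive mostly expanding diffeomorphism by showing there can be at most one. By Lemma~\ref{le:existecumedida} we already know that $f$ has finitely many ergodic physical measures $\mu_1,\dots,\mu_\ell$, all of which are Gibbs $cu$-states, and whose basins together cover Lebesgue almost every point of $M$. So it suffices to prove that $\ell=1$; the full-measure conclusion then follows immediately from Lemma~\ref{le:existecumedida}.

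The key ingredient is Lemma~\ref{le:openbasin}, which tells us that for each $j$ the basin $\mathcal{B}(\mu_j)$ is open up to a zero Lebesgue measure set; more precisely, the proof of that lemma produces for each $j$ an open, $f$-invariant set $\mathcal{A}_j$ (an $s$-saturated union of unstable leaves, itself a neighborhood of $\operatorname{supp}\mu_j$) in which $\mathcal{B}(\mu_j)$ has full Lebesgue measure. The basins are pairwise disjoint since the $\mu_j$ are distinct ergodic measures, and because each $\mathcal{B}(\mu_j)$ has full measure in $\mathcal{A}_j$, the open sets $\mathcal{A}_j$ must be essentially disjoint as well (their pairwise intersections have zero Lebesgue measure). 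First I would record that each $\mathcal{A}_j$ is open and $f$-invariant, and that the union $\bigcup_{j=1}^\ell \mathcal{A}_j$ has full Lebesgue measure in $M$.

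Now I would invoke transitivity. Since $f$ is transitive there is a point $z$ whose forward orbit is dense in $M$. The sets $\mathcal{A}_1,\dots,\mathcal{A}_\ell$ are nonempty open $f$-invariant sets; by invariance the dense orbit, once it enters some $\mathcal{A}_j$, stays in $\mathcal{A}_j$ forever, so the orbit of $z$ can meet only one of them. But a dense orbit must meet every nonempty open set. Hence there can be only one $\mathcal{A}_j$, which forces $\ell=1$. (Equivalently, if there were two distinct invariant open sets $\mathcal{A}_i,\mathcal{A}_j$, then $M\setminus \mathcal{A}_i$ would be a nonempty closed invariant set with nonempty interior containing $\mathcal{A}_j$, contradicting transitivity.)

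The only delicate point, and the step I expect to require the most care, is the passage from ``$\mathcal{B}(\mu_j)$ is open up to measure zero'' to an honest nonempty open invariant set on which a dense orbit can be trapped: one must make sure to work with the genuinely open set $\mathcal{A}_j$ furnished by the proof of Lemma~\ref{le:openbasin}, rather than with the basin itself, since the basin is only open modulo a null set. Once the transitivity argument is applied to the open invariant sets $\mathcal{A}_j$ the conclusion $\ell=1$ is immediate, and uniqueness of the physical measure together with the full-measure basin follows directly from Lemma~\ref{le:existecumedida}.
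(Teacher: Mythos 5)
Your proposal is correct and takes essentially the same route as the paper: uniqueness is derived from Lemma~\ref{le:openbasin} (basins open modulo null sets) together with topological transitivity, the paper phrasing transitivity as the existence of $n$ with $f^n(\mathcal{B}(\mu))\cap\mathcal{B}(\nu)\neq\emptyset$ rather than via a dense orbit trapped in invariant open sets. Your extra care in working with the genuinely open invariant sets $\mathcal{A}_j$ (and noting that two such sets with essentially disjoint full-measure basins must actually be disjoint, since a nonempty open intersection would have positive Lebesgue measure) in fact tightens the paper's terse three-line argument, which applies transitivity directly to sets that are only open up to measure zero.
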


\proof Assume that there are two physical measures $\mu$ and $\nu$ for $f$. The basins $\mathcal{B}(\mu)$ and $\mathcal{B}(\nu)$ are open up to a zero Lebesgue measure subset. From topological transitivity, there is a non-negative integer $n$ such that $f^n(\mathcal{B}(\mu))\cap\mathcal{B}(\nu)\ne\emptyset$. Hence $\mu=\nu$.
\endproof

%

\section{Dolgopyat-Hu-Pesin blocks}\label{sec:DHP Blocks}
 
In \cite[Appendix B]{BP1999}, Dolgopyat, Hu and Pesin provide an example of a non-uniformly hyperbolic volume preserving diffeomorphism on $\mathbb{T}^3$ with countably many ergodic components. As a key step in the construction they consider a linear Anosov diffeomorphism $A$ on $\mathbb{T}^2$ and the map $F\::\:[0,1] \times \mathbb{T}^2\to[0,1] \times \mathbb{T}^2$ defined by $F=I\times A$, where $I$ is the identity map. Then, by a suitable perturbation of $F$, they construct a $C^{\infty}$ diffeomorphism $g$ on $M=[0,1] \times \mathbb{T}^2$, with the following properties:
\begin{itemize}
\item $g$ is a partially hyperbolic  volume preserving diffeomorphism on $M$.
\item The connected components of the boundary, $W_i := \{i \} \times \mathbb{T}^2 , \ i = 0,1$ are left invariant by $g$ and $g|W_i=F|W_i$ is a linear Anosov map.
\item $g$ is ergodic with respect to Lebesgue measure and has positive central Lyapunov exponents almost everywhere.
\item $g$ can be chosen to be as $C^r$ close to $F$ as desired, for any $r \geq 2$.
\end{itemize}

The meaning of the first item above, i.e. that $g$ is partially hyperbolic, could potentially be confusing, as we are dealing with a manifold with boundary, and no definition of partial hyperbolicity has been provided in this case. In fact there is no need for that. The diffeomorphism $g$ is constructed so that it becomes a $C^{\infty}$ partially hyperbolic diffeomorphism on $\mathbb{T}^3$ when identifying $\{0\}\times \mathbb{T}^2$ with $\{1\}\times \mathbb{T}^2$ in the natural manner. This is what enables them to glue two or more copies of $g$ together.

We observe that if $\mu$ is a Gibbs $u$-state for $A$, then $\delta_0\times\mu$ is a $u$-measure for $g$ with zero central Lyapunov exponent, where $\delta_0$ is the Dirac measure concentrate on $0$. Therefore $g$ is not mostly expanding, although it does satisfy the non uniformly expanding along the center (or center-unstable) direction condition \eqref{eq:NUE}. 

Although the construction of the diffeomorphism $g$ by Dolgopyat, Hu, and Pesin was done as an intermediate step in providing an example of a diffeomorphism with non-zero Lyapunov exponents almost everywhere and yet having countable many ergodic components, it turns out to useful when thinking about the NUE-condition of \cite{ABV2000} and how it compares to our notion of mostly expanding central direction. 

Gluing copies of $g$ together is an easy matter.
Indeed, given $0<\lambda<1$ and $0 \leq \tau < 1- \lambda$, define the squeezing and sliding action
\begin{eqnarray*} 
 L_{\lambda, \tau}: [0,1) \times \mathbb{T}^2 &\rightarrow & [\tau, \tau + \lambda) \times \mathbb{T}^2 \\
       (x,y,z) & \mapsto & (\lambda x + \tau, y, z).
\end{eqnarray*}

Now suppose we wish to construct an example of a  diffeomorphism $f: \mathbb{T}^3 \rightarrow \mathbb{T}^3$ satisfying the NUE-condition and exhibiting  precisely $k$ physical measures, all we have to do is to identify $\mathbb{T}^3$ with $[0, 1) \times \mathbb{T}^2$ in the obvious way and take $f$ to be the diffeomorphism satisfying 

\[f\vert_{ [ \frac{i}{k}, \frac{i+1}{k})}= L_{\frac{1}{k}, \frac{i}{k}} \circ g \circ L_{\frac{1}{k}, \frac{i}{k}}^{-1}, \ i=0, \ldots, k.\]

\begin{prop} Having condition (\ref{eq:NUE}) or (\ref{eq:wNUE}) satisfied on a set of full Lebesgue measure is not a robust property.
\end{prop}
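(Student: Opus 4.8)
The plan is to take the Dolgopyat--Hu--Pesin example and perturb it near an invariant wall so that the wall becomes attracting along the central direction; this produces a positive Lebesgue measure set of orbits on which $E^{cu}$ is, on average, contracted, and therefore on which neither \eqref{eq:NUE} nor \eqref{eq:wNUE} can hold. Concretely, I would work on $\mathbb{T}^3\cong[0,1)\times\mathbb{T}^2$ with the diffeomorphism $g$ obtained by identifying $\{0\}\times\mathbb{T}^2$ with $\{1\}\times\mathbb{T}^2$, and write $W$ for the resulting invariant torus, on which $g$ restricts to the linear Anosov map $A$. As recalled in the construction above, $g$ has positive central Lyapunov exponents at Lebesgue almost every point, so \eqref{eq:NUE} (and hence also \eqref{eq:wNUE}) is satisfied on the complement of $W$, a full Lebesgue measure set. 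This is the example for which the condition holds on a full measure set.

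The perturbation is the heart of the argument. In coordinates $(x,y,z)$ near $W=\{x=0\}$, I would set $\tilde g=\phi\circ g$, where $\phi(x,y,z)=(h(x),y,z)$ with $h$ equal to the identity outside a small neighbourhood of $W$, $h(0)=0$ and $h'(0)=1-\epsilon$ for a small $\epsilon>0$. By design $W$ stays invariant, $\tilde g|_W=g|_W=A$, and the partially hyperbolic splitting is preserved along $W$ (the stable and unstable directions of $A$ are tangent to $W$ and untouched, while $E^c$ is the $x$-axis, whose derivative is multiplied by $h'(0)=1-\epsilon$). Since $\tilde g$ is $C^r$-close to $g$ and partial hyperbolicity is $C^1$-open, $\tilde g$ is again a partially hyperbolic diffeomorphism.

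Next I would identify the physical measure responsible for the failure. Let $\mu$ be the SRB measure of $A$ on $W$. Exactly as in the observation that $\delta_0\times\mu$ is a Gibbs $u$-state for $g$, the measure $\delta_0\times\mu$ is a Gibbs $u$-state for $\tilde g$ (the strong unstable manifolds through points of $W$ remain inside $W$, and $\tilde g|_W=A$ has absolutely continuous unstable conditionals), and it is ergodic because $A$ is. Its central exponent for $\tilde g$ equals $\log(1-\epsilon)<0$, since the only change along $W$ is the extra normal factor $h'(0)$. By the criterion recalled in Section~\ref{sec:preliminaries} (an ergodic Gibbs $u$-state with negative central exponent is a physical measure), $\delta_0\times\mu$ is physical, so by definition its basin has positive Lebesgue measure. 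For Lebesgue almost every $x$ in this basin the Birkhoff averages converge to the space average, whence
\[
\lim_{n\to\infty}\frac1n\sum_{k=0}^{n-1}\log\|D\tilde g^{-1}|E^{cu}_{\tilde g^k(x)}\|=-\int\log{\rm m}(D\tilde g|E^{cu})\,d(\delta_0\times\mu)\geq-\log(1-\epsilon)>0,
\]
using ${\rm m}(D\tilde g|E^{cu})\leq\|D\tilde g|E^c\|$. As this limit is strictly positive, both \eqref{eq:NUE} and \eqref{eq:wNUE} fail throughout the basin, so neither condition holds on a full Lebesgue measure set for $\tilde g$. Letting $\epsilon\to0$ yields diffeomorphisms arbitrarily $C^r$-close to $g$ for which the property fails, which is the desired non-robustness; the same perturbation applied to one wall of the glued examples $f$ gives the corresponding statement there.

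The main obstacle I anticipate is ensuring that an invariant torus carrying a Gibbs $u$-state with negative central exponent actually survives the perturbation. The wall $W$ is not normally hyperbolic, because the internal Anosov expansion and contraction dominate the weak central (normal) rate, so the standard persistence theorem for normally hyperbolic invariant manifolds is unavailable. I sidestep this by choosing a perturbation that fixes $W$ pointwise and acts only in the normal direction: invariance of $W$ is then automatic, $\tilde g|_W$ remains equal to $A$, and the central exponent of $\delta_0\times\mu$ can be computed explicitly as $\log(1-\epsilon)$. The harmless index shift between $\|D\tilde g^{-1}|E^{cu}\|$ and ${\rm m}(D\tilde g|E^{cu})^{-1}$ in the Birkhoff sum contributes only boundary terms of order $O(1)$ and does not affect the limit.
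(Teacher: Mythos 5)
Your argument is correct, but it follows a genuinely different route from the paper's. The paper also starts from the Dolgopyat--Hu--Pesin map $g$, but its perturbation is blunter: it conjugates $g$ into $[0,1-\epsilon)\times\mathbb{T}^2$ by the squeezing map $L_{1-\epsilon,0}$ and defines $f_\epsilon$ on the remaining slab $[1-\epsilon,1)\times\mathbb{T}^2$ to be the neutral product $(x,y,z)\mapsto (x,A(y,z))$. On that slab the central derivative is identically $1$, so $\log\|Df_\epsilon^{-1}|E^{cu}\|\equiv 0$ there, and both \eqref{eq:NUE} and \eqref{eq:wNUE} fail pointwise on an explicit invariant set of Lebesgue measure $\epsilon$; no ergodic theory whatsoever is needed. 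Your construction instead leaves the dynamics essentially untouched away from the wall and makes $W$ normally attracting, so the failure set is the (non-explicit) basin of $\delta_0\times\mu$, and to know that this set has positive volume you must invoke the Gibbs $u$-state machinery of Section~\ref{sec:preliminaries}: that $\delta_0\times\mu$ is an ergodic $u$-state for $\tilde g$, that its central exponent is $\log(1-\epsilon)<0$, that ergodic $u$-states with negative central exponents are physical, and then Birkhoff averaging on the basin. What your route buys: the perturbation is supported in an arbitrarily small neighbourhood of $W$; it exhibits explicitly a Gibbs $u$-state with negative central exponent, hence shows at the same time that $\tilde g$ is not mostly expanding; and it identifies the dynamical mechanism (an attracting invariant torus capturing positive volume) rather than merely inserting a neutral region. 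What the paper's route buys: complete elementarity and an explicit, pointwise failure set. One caveat in your write-up: along $W$ the centre bundle of $g$ (hence of $\tilde g$) need not be literally the $x$-axis, since the DHP construction may tilt $E^c$ off the normal direction at the wall; but your exponent computation survives without this claim, for instance via determinants: $\tilde g|_W=A$ is area preserving, $|\det D\tilde g|=(1-\epsilon)\,|\det Dg|=1-\epsilon$ at points of $W$, so the sum of the three exponents of $\delta_0\times\mu$ equals $\log(1-\epsilon)$ while the two tangential exponents cancel, and the central exponent is exactly $\log(1-\epsilon)$.
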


\proof Pick any $0<\epsilon<1$ and define $f_{\epsilon}$ by 
\begin{align*}
 f_{\epsilon} (x,y,z) & =  L_{1-\epsilon, 0} \circ g \circ L_{1-\epsilon, 0}^{-1} (x,y,z), \quad & x \in [0, 1-\epsilon) \\
  f_{\epsilon} (x,y,z) & =  (x, A(y,z)),  &x \in [1-\epsilon, 1).
\end{align*}
By construction, $f_{\epsilon}$ approaches $g$ as $\epsilon \rightarrow 0$ in any $C^r$ topology, but none of the $f_{\epsilon}$ satisfy the NUE-condition.
\endproof

\begin{prop} Having condition (\ref{eq:NUE}) or (\ref{eq:wNUE}) satisfied on a set of positive Lebesgue measure is not a robust property.
\end{prop}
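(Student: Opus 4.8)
The plan is to exhibit a single diffeomorphism satisfying condition \eqref{eq:NUE} on a set of full (hence positive) Lebesgue measure which is accumulated, in the $C^r$ topology, by diffeomorphisms for which the set where \eqref{eq:NUE} (or \eqref{eq:wNUE}) holds is Lebesgue-null. I stress first that the family $f_\epsilon$ of the previous proposition does \emph{not} serve this purpose: each $f_\epsilon$ still satisfies \eqref{eq:NUE} on the positive measure set $[0,1-\epsilon)\times\mathbb{T}^2$, so it only witnesses the failure of robustness for \emph{full} measure. To treat positive measure we must instead produce perturbations that destroy central expansion on \emph{almost every} orbit simultaneously.

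I would take as witness the Dolgopyat--Hu--Pesin diffeomorphism $g$ itself, viewed on $\mathbb{T}^3$ by identifying $W_0$ with $W_1$ into a single neutral invariant torus $W$. Since $g$ is ergodic with respect to Lebesgue and has positive central Lyapunov exponent almost everywhere, Birkhoff's theorem gives $\int \log\|(Dg\vert E^{cu})^{-1}\|\,d{\rm Leb}<0$, so $g$ satisfies \eqref{eq:NUE} on a full measure set. The feature to exploit is that $W$ is an \emph{indifferent} (neutral) invariant set: as already observed, $g$ carries a $u$-measure supported on $W$ with zero central exponent, so orbits that linger near $W$ accumulate no central expansion.

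The perturbations would be $g_\delta=\Phi_\delta\circ g$, where $\Phi_\delta$ is a $C^r$-small diffeomorphism supported in a neighborhood of $W$, equal to the identity elsewhere, which pushes points toward $W$ along the central direction and installs a mild central contraction transverse to $W$, turning it into a weakly attracting torus. Then $g_\delta\to g$ in the $C^r$ topology as $\delta\to 0$, and each $g_\delta$ remains partially hyperbolic with splitting close to that of $g$. The claim is that for every $\delta>0$, Lebesgue-almost every orbit of $g_\delta$ converges to $W$ and spends asymptotically full density of time in the region where the centre is non-expanding; a direct computation of $\log\|(Dg_\delta\vert E^{cu})^{-1}\|$ near $W$ then gives $\lambda^c(x,g_\delta)\le 0$ for Lebesgue-almost every $x$, so that the $\limsup$ in \eqref{eq:NUE} (and even the $\liminf$ in \eqref{eq:wNUE}) is nonnegative off a Lebesgue-null set. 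This shows that positive-measure validity of the condition is not robust.

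The main obstacle is exactly the trapping claim: that the basin of $W$ has full Lebesgue measure for $g_\delta$. This is a genuinely non-uniform, intermittent phenomenon, and it is where all the real work lies; the bookkeeping with \eqref{eq:NUE} is then routine. The mechanism is that, because $W$ is neutral, a typical orbit makes excursions into the central-expanding interior but returns to ever longer sojourns near $W$; during these long sojourns the arbitrarily small inward push of $\Phi_\delta$ accumulates and eventually dominates the bounded expansion gained during the comparatively short excursions. I would make this rigorous by means of a Lyapunov-type function $V=\operatorname{dist}(\cdot,W)$ shown to be an eventual supermartingale along Lebesgue-typical orbits---controlling the excursions through the absolutely continuous conditional measures of the Gibbs $u$-states of $g$ along unstable plaques, and the return behaviour through a first-return map to a cross-section bounded away from $W$---so that $V(g_\delta^n(x))\to 0$ for almost every $x$. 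Establishing this estimate, rather than the final identification of the failure of \eqref{eq:NUE}, is the heart of the proof.
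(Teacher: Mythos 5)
Your strategy is genuinely different from the paper's, but it has a gap at exactly the point you yourself flag as ``the heart of the proof,'' and that gap is not a technical estimate: it is the whole difficulty. The trapping claim --- that for every small $\delta>0$, Lebesgue-almost every orbit of the non-conservative perturbation $g_\delta=\Phi_\delta\circ g$ converges to the torus $W$ --- is an assertion of the same depth as the theorem being proved. Nothing rules out, a priori, that $g_\delta$ retains a physical measure in the interior with positive central exponent whose basin has positive volume; whether the interior non-uniform expansion of $g$ survives composition with $\Phi_\delta$ is precisely the kind of non-robustness question that Theorem~\ref{mteo:A} is about, so it cannot be delegated to a lemma-sized argument. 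The supermartingale sketch is also circular as stated: the excursion and return statistics you need are those of Lebesgue-typical orbits of $g_\delta$, but the tools you invoke (Gibbs $u$-states of $g$, first-return asymptotics of the unperturbed intermittent dynamics near $W$) are attached to $g$, and once you compose with $\Phi_\delta$ there is no invariant measure equivalent to Lebesgue; controlling Lebesgue-typical behaviour of $g_\delta$ essentially amounts to determining its physical measures, which is the very thing in question.

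The paper avoids all of this by staying inside the volume-preserving category, where a known theorem supplies the ``trapping'' for free. It glues one copy of $g$ with one copy of $g^{-1}$ to produce a conservative diffeomorphism $f$ of $\mathbb{T}^3$ that satisfies \eqref{eq:NUE} and \eqref{eq:wNUE} on a set of positive measure (the $g$-block) but whose volume-integrated central Lyapunov exponent is negative. By the result of Rodr\'iguez Hertz--Rodr\'iguez Hertz--Ures \cite{RHRHU2008}, $f$ is $C^r$-approximated by ergodic volume-preserving diffeomorphisms $\tilde f$; ergodicity forces the central exponent of $\tilde f$ to be constant Lebesgue-almost everywhere, and $C^1$-proximity forces that constant to be negative, so $\tilde f$ fails \eqref{eq:NUE} and \eqref{eq:wNUE} on every set of positive Lebesgue measure. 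In short: the paper substitutes a black-box ergodicity theorem for the hard almost-sure convergence statement your construction requires, and without an actual proof of that convergence your argument does not close.
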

\proof Let $f$ be a diffeomorphism on $\mathbb{T}^3$ obtained by gluing two blocks as above with different sign on their central Lyapunov exponents. More precisely, let $f$ be such that 
$$
f\vert_{ [0,\frac{1}{3}) \times \mathbb{T}^2 }= L_{\frac{1}{3}, 0} \circ g \circ L^{-1}_{\frac{1}{3}, 0},
$$
and
$$
f\vert_{  [\frac{1}{3},1) \times \mathbb{T}^2 } = L_{\frac{2}{3}, \frac{1}{3}} \circ g^{-1} \circ L^{-1}_{\frac{2}{3}, \frac{1}{3}},
$$

Then both $f$ satisfies (\ref{eq:NUE}) and (\ref{eq:wNUE}) on a set of positive measure. Note that the integrated central Lyapunov exponent of $f$ is negative. From \cite{RHRHU2008} we know that, for every $r>1$, there is a $C^r$ diffeomorphism $\tilde{f}$ arbitrarily close to $f$ in the $C^r$ topology, such that $\tilde{f}$ is ergodic. In particular the central Lyapunov exponent of $\tilde{f}$ is constant Lebesgue almost everywhere. Since $\tilde{f}$ is $C^1$ close to $f$, this Lyapunov exponent is negative. In particular, $\tilde{f}$ fails to satisfy conditions (\ref{eq:NUE}) and (\ref{eq:wNUE}) on a set of positive Lebesgue measure.\endproof

\begin{prop} Having condition (\ref{eq:NUE}) or (\ref{eq:wNUE}) satisfied on a set of full measure does not imply that the number of physical measures vary upper-semi-continuously with the dynamics.
\end{prop}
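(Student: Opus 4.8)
The plan is to take for $f$ a single Dolgopyat--Hu--Pesin block $g\colon\mathbb{T}^3\to\mathbb{T}^3$ as constructed above. Since $g$ is ergodic with respect to the volume and has positive central Lyapunov exponent Lebesgue-almost everywhere, $g$ satisfies the NUE-condition (and a fortiori the wNUE-condition) on a set of full Lebesgue measure, and its unique physical measure is the volume; thus $f=g$ has exactly one physical measure. Recall, however, that $g$ is \emph{not} mostly expanding: on the invariant boundary torus $T_0=\{0\}\times\mathbb{T}^2$ one has $g=I\times A$, so the Gibbs $u$-state $\delta_0\times m_A$ (with $m_A$ the Lebesgue/SRB measure of the Anosov map $A$) has zero central exponent. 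The goal is to produce a sequence $g_\epsilon\to f$ in the $C^r$ topology carrying at least two physical measures, which shows that the number of physical measures is not upper-semicontinuous at $f$.

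The leverage for making the perturbation small is precisely that $g$ is tangent to the degenerate map $F=I\times A$ along $T_0$: since $g=F$ on $T_0$ we have $Dg|E^c\equiv 1$ there, and by continuity $Dg|E^c$ is as close to $1$ as we like on a neighbourhood of $T_0$. First I would fix a thin central band $[a,b]\times\mathbb{T}^2$ with $0<a<b$ small, and modify $g$ only inside it, lowering the central derivative slightly below $1$ so as to create a normally attracting invariant torus $T_*=\{c_\epsilon\}\times\mathbb{T}^2$, $a<c_\epsilon<b$, flanked by two central repelling tori at the endpoints of the band. Because the unperturbed central derivative on the band is already within $o(1)$ of $1$, only an $o(1)$ dip is needed, and spreading it over the band keeps $\|g_\epsilon-g\|_{C^r}\to 0$ as the band shrinks towards $T_0$ and the dip tends to $0$. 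Off the band $g_\epsilon$ coincides with $g$, and the two repelling tori bound the basin of $T_*$, so this basin is a thin slab contained in $[a,b]\times\mathbb{T}^2$ whose complement is forward invariant.

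It then remains to count physical measures of $g_\epsilon$. On one hand, $T_*$ is a normally hyperbolic attracting torus on which the dynamics is the linear Anosov $A$; hence $\delta_{c_\epsilon}\times m_A$ is an ergodic measure whose basin, by absolute continuity of the strong stable foliation, carries full Lebesgue measure in the open attracting slab — exactly the argument of Lemma~\ref{le:openbasin}. Thus $\delta_{c_\epsilon}\times m_A$ is a physical measure of $g_\epsilon$. On the other hand, off the attracting slab $g_\epsilon$ is unchanged and still expanding along the centre, so its Gibbs $u$-states supported there (which exist by Proposition~\ref{prop:uM6}) have positive central exponent and are therefore Gibbs $cu$-states, i.e. physical measures, by Proposition~\ref{prop:ADLP2014}. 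Consequently $g_\epsilon$ has at least two physical measures while $f=g$ has only one, and since $g_\epsilon\to f$ in $C^r$ the number of physical measures fails to be upper-semicontinuous at $f$.

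The delicate point, and the step I expect to be the main obstacle, is guaranteeing that the newly created sink $T_*$ has a \emph{small} basin that does not engulf the whole manifold, for otherwise the bulk physical measure would be absorbed and the count would not increase. This is where one must control the interval/circle balance of the central dynamics: a single attracting torus on the central circle would attract almost everything, so a repelling barrier is indispensable. Working in a band adjacent to the invariant neutral torus $T_0$, where $Dg|E^c\approx 1$, is exactly what permits an arbitrarily small perturbation to install a sink flanked by the required repelling tori, confining its basin to the thin slab while leaving the expanding bulk — and hence a second physical measure — intact.
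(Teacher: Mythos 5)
Your starting point ($f=g$, one physical measure, NUE on a full measure set) matches the paper's, but your perturbation does not, and the step where you produce the \emph{second} physical measure of $g_\epsilon$ has a genuine gap. The claim that ``off the attracting slab $g_\epsilon$ is unchanged and still expanding along the centre'' conflates derivative information on a region with Lyapunov exponents of orbits: $g$ is \emph{not} uniformly expanding along $E^c$; its central exponents are positive only Lebesgue-a.e., and this is a property of infinite $g$-orbits. Since $g$ is ergodic with respect to volume, Lebesgue-a.e. orbit is dense and therefore enters the modified band, after which it is no longer a $g_\epsilon$-orbit; so the a.e. positivity of exponents for $g$ gives no information whatsoever about $g_\epsilon$-orbits confined to the complement of the slab. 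Nor can you reach the conclusion through Gibbs $u$-states: the invariant region containing $T_1=\{1\}\times\mathbb{T}^2$ still carries the $u$-state $\delta_1\times m_A$ with zero central exponent, so $g_\epsilon$ restricted to that region is not mostly expanding and Theorem~\ref{mteo:C} does not apply; and Proposition~\ref{prop:ADLP2014} would require the wNUE-condition for $g_\epsilon$ on a positive Lebesgue measure set off the slab, which is exactly the kind of hypothesis that Section~\ref{sec:DHP Blocks} shows is destroyed by perturbation (the second proposition of that section exhibits perturbations after which \eqref{eq:NUE} and \eqref{eq:wNUE} fail on every positive measure set). Without that second measure the count for $g_\epsilon$ could be one --- or the bulk could carry no physical measure at all --- and upper semicontinuity is not violated.

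There are also problems in the construction of $T_*$ itself. Interior tori $\{c\}\times\mathbb{T}^2$ are not $g$-invariant (they cannot be, by ergodicity), so you must first cancel the $x$-displacement of $g$ along the chosen torus; relatedly, you cannot have $g_\epsilon=g$ up to the boundary of the band and simultaneously make that boundary invariant, so the repelling barriers must sit strictly inside the modified region. Your estimate that only an $o(1)$ perturbation is needed rests on ``$Dg|E^c\equiv 1$ on $T_0$'', which does not follow from $g|_{T_0}=F|_{T_0}$: the central direction is transverse to $T_0$, and the only available information is the averaged statement that $\delta_0\times m_A$ has zero central exponent. The paper sidesteps all of this: it glues a second, squeezed Dolgopyat--Hu--Pesin block $g'$ (chosen $C^r$-close to $F$ \emph{after} fixing $\epsilon$) into the collar $[1-\epsilon,1)\times\mathbb{T}^2$, so the perturbed map is a union of two invariant blocks, each volume-preserving and ergodic with positive central exponents \emph{by construction}; each block's normalized volume is then manifestly an ergodic physical measure, giving two physical measures near $g$ with nothing left to verify. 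If you want to salvage the attracting-torus idea, you would need to make the dynamics outside the slab self-contained in the same way (e.g.\ itself a DHP block), which essentially reduces your construction to the paper's.
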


\proof To prove this  claim, we have to show that the number of physical measures can undergo an explotion. All we have to do to accomplish that is to modify $f_{\epsilon}$ in Claim 1 on $[1-\epsilon, 1) \times \mathbb{T}^2$. That is,
\begin{align}
 f_{\epsilon} (x,y,z) & =  L_{1-\epsilon, 0} \circ g \circ L_{1-\epsilon, 0}^{-1} (x,y,z), \quad & x \in [0, 1-\epsilon) \\
  f_{\epsilon} (x,y,z) & =  L_{\epsilon, 1-\epsilon} \circ g' \circ L_{\epsilon, 1-\epsilon}^{-1} (x,y,z),  &x \in [1-\epsilon, 1),
\end{align}
where $g':[0,1]\times \mathbb{T}^2$ is a diffeomorphism satisfying all the properties of $g$, and sufficiently close to  $F$ in the $C^r$ topology. The resulting $f_{\epsilon}$ has two physical measures and is arbitrarily close to $g$ (which has one).
\endproof

\section{Derived from Anosov  example}\label{sec:DA} Here we show that a classical example of a non-hyperbolic robustly transitive diffeomorphism due to Ma\~{n}e \cite{M1978} satisfies our notion of partial hyperbolicity with mostly expanding central direction in the strong sense. We do that following the ideas developed in \cite{BV2000}.

We start with   a linear Anosov diffeomorphism $f_0\::\:\mathbb{T}^3\to \mathbb{T}^3$ with three different eigenvalues 
$$\lambda_s<1/3<1<\lambda_c\ll3<\lambda_u.$$ 
We consider the splitting 
$$T\mathbb{T}^3=E^s_0\oplus E^c_0\oplus E^u_0,$$
into eigenspaces corresponding to these eigenvalues. Let $p_0\in\mathbb{T}^3$ be a fixed point of $f_0$ and consider $\delta>0$ (to be fixed later). We deform $f_0$ along a one-parameter family of diffeomorphisms $f_t$, $t\geq 0$, by isotopy inside $V=B(p_0,\delta)$, to make it  go through a pitchfork bifurcation. The expansion in the strong unstable subbundle $E^u_t$ remains large everywhere and the same is true for the contraction in the strong-stable subbundle $E^s_t$. More precisely, for every parameter $t\geq 0$,
$$
|Df_t|E^s_t|<1/3 <3< |Df_t|E^u_t|.
$$
Then, the distortion along the strong unstable leaves remains uniformly bounded in the whole family $f_t$. 

The center subbundle $E^c_t$ restricted outside $V$ also remains expanding, that is, 
\begin{equation}\label{eq:expcen}
3>|Df_t(x)|E^c_t(x)|\geq \eta_c(t)>1, \text{ for every }  x\in\mathbb{T}^3\setminus V.
\end{equation} 
where $\eta_c(0)=\lambda_c>1$. Moreover, for every parameter $0\leq t<t_0$, the deformation can be done  in a such a way  that \eqref{eq:expcen} can be extended to every $x\in \mathbb{T}^3$ choosing carefully the constant $\eta_c(t)>1$ and so,  $f_t$ is expanding along the central direction $E^c_t$. If we denote by $p_t$, $t\geq 0$, the continuation of the hyperbolic fixed point $p_0$ of $f_0$ inside the neighborhood $V$, the eigenvalue $\lambda_c(p_t)>1$ if $0\leq t<t_0$ and becomes $\lambda_c(p_{t_0})=1$. Then, for parameters $t_0+\epsilon>t>t_0$, sightly greater than $t_0$,  the continuation $p_t$ is a saddle point whose stable index changes from 1 to 2, and two other saddle points $q_t^1, q_t^2\in V$, of stable index 1, are created (See Figure~\ref{figura:DA}).

\begin{figure}
\begin{center}
\includegraphics[scale=0.7]{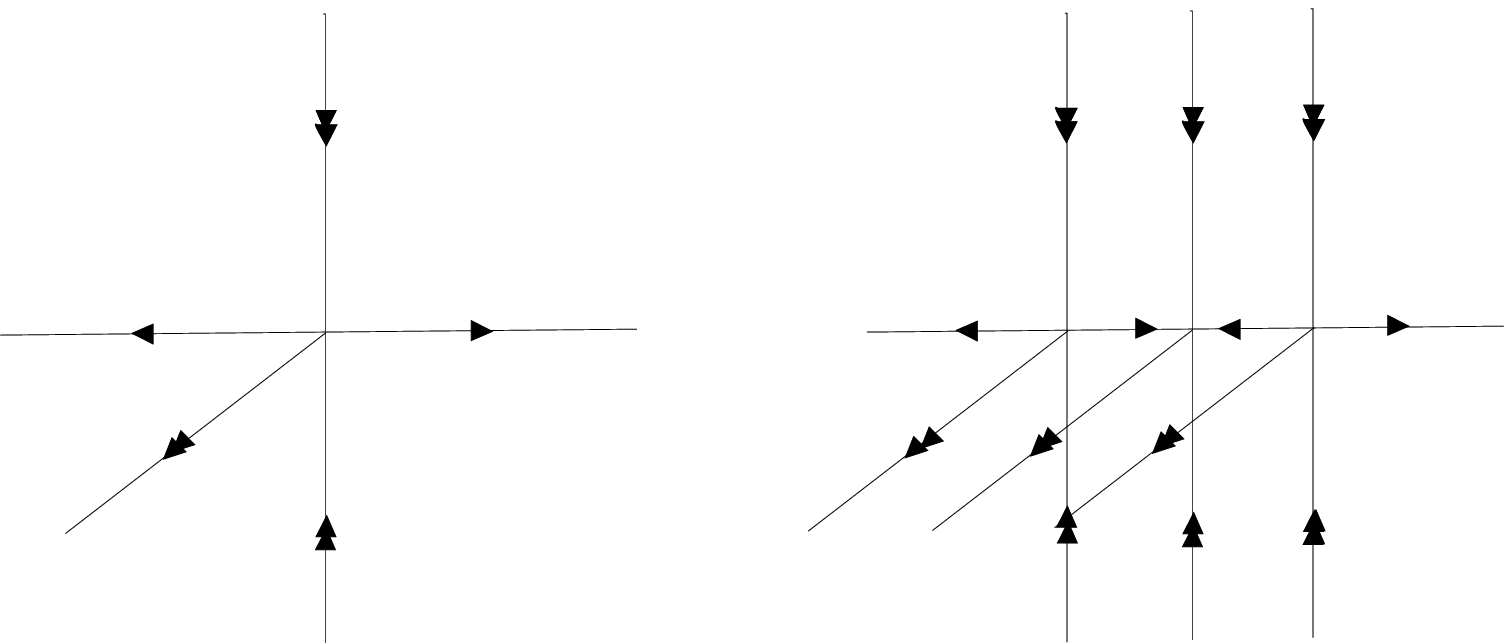} 
\end{center}
\caption{}
\label{figura:DA}
\end{figure}

Note that  for every $\beta>0$, we can choose $\epsilon>0$ such that for every $0\leq t< t_0+\epsilon$ and every $x\in V$,
$$|Df_t(x)|E^c_t(x)|>1-\beta.$$

Then, for $0\leq t< t_0+\epsilon$, the diffeomorphims $f_t$ is partially hyperbolic. Hence there exist unique foliations $\mathcal{F}^s_t$, $\mathcal{F}^u_t$ tangent to $E^s_t$, $E^u_t$ respectively. Moreover,  it follows from \cite{HPS1977}    that $f_t$ has an invariant central foliation $\mathcal{F}^c_t$ tangent to the central direction $E_t^c$, $0\leq t< t_0+\epsilon$. Since  it remains normally contracting all the way during the isotopy the center unstable foliation $\mathcal{F}^{cu}_t$ is leaf conjugate to the unstable foliation $\mathcal{F}^{cu}_0$, that means there exists a homeomorphism $h:\mathbb{T}^3\to \mathbb{T}^3$ which sends leaves of $\mathcal{F}^{cu}_t$ into leaves of $\mathcal{F}^{cu}_0$ and conjugates the dynamics of the leaves. More precisely,
$$h(W^{cu}_t( f_t(x))) = W^{cu}_0 ( f_0(h(x))).$$

If we fix a diffeomorphism $f_t$ with $t$ sightly greater than $t_0$ in the family above, we obtain a $C^1$-open set $\mathcal{U}$ of diffeomporphisms containing $f_t$, such that  every $f\in\mathcal{U}$ satisfies:

\begin{enumerate}
\item $f$ is partially hyperbolic and uniquely integrable. We have an $Df$-invariant splitting into three non trivial subbundles
$$T\mathbb{T}^3=E^s\oplus E^c\oplus E^u,$$
satisfying
\begin{equation}\label{eq:DAunif}
|Df|E^s|<1/3 <3< |Df|E^u|.
\end{equation}
and for every $x\in V$,
\begin{equation}\label{eq:DAcenter}
|Df(x)|E^c(x)|>1-\beta.
\end{equation}

\item There exist unique foliations $\mathcal{F}^*$ tangent to $E^*$ respectively, where $*=s,c,u$.

\item $f$ has three hyperbolic fixed saddles inside $V$ contained in a same central leaf $\mathcal{F}^c(p)$: two saddle with stable index 1 and one saddle with stable index 2.
\end{enumerate}

Recall that  for a $u$-segment $\gamma$, by \eqref{eq:DAunif} we have
$${\rm length}(f(\gamma))\geq 3\cdot{\rm length}(\gamma),$$
so we can assume that there are constants $L\geq 0$ such that, given any $u$-segment $\gamma$ with
$${\rm length}(\gamma)\geq L,$$
then there is an integer $k=k(\gamma)\geq 1$ such that we may write
$$f(\gamma)=\gamma_1\cup\dots\cup\gamma_k,$$
as the disjoint union of $u$-segments $\gamma_i$  satisfying
$$2L\geq {\rm length}(\gamma_i)\geq L,\quad i=1,\dots,k.$$

By redefining $V$ if it is necessary, we can assume that there is  $0<\tau_0<1$ such that, given any $u$-segment $\gamma$ with 
${\rm length}(\gamma)\geq L$, if $I_V=\{i\::\: \gamma_i\cap V\ne \emptyset,\: i=1,\dots, k(\gamma)\}$, then
$$\sum_{j\in I_V}{\rm length}(\gamma_j)\leq \tau_0\cdot {\rm length}(f(\gamma)).$$
Of course $L\geq 0$ can be chosen close the size of $V$, so the image of any $u$-segment with length bigger than $L$ spend a positive fraction $1-\tau_0$ (in length) outside $V$.

For any integer $k\geq1$, and $0<\alpha<1$, we define

$$M(k,\alpha)=\{x\in\gamma\::\: \frac1k\sum_{j=0}^{k-1}\mathbf{1}_V(f^j(x))\geq\alpha\},$$
where $\mathbf{1}_V$ is the characteristic function of $V$. Note that $x\in M(k,\alpha)$ if its orbit  spends a fraction of time bigger than $\alpha>0$ inside $V$ (until time $k$). Of course, the complement of the set
$$\bigcap_{n\geq 1}\bigcup_{k\geq n}M(k,\alpha),$$
corresponds to the set of points $x\in \gamma$ such that spends at least a fraction $1-\alpha$ outside $V$.

\begin{lema}\label{le:tiempouseg} There is $0<\alpha_0<1$, depending on $\alpha_0=\alpha_0(\tau_0,|Df|E^u|)$ such that, for every $u$-segment $\gamma$, and every $\alpha_0\leq\alpha<1$, the set
$$M_\gamma:=\bigcap_{n\geq 1}\bigcup_{k\geq n}M(k,\alpha),$$
has zero Lebesgue measure in $\gamma$.
\end{lema}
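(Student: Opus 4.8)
The plan is to recognize $M_\gamma$ as a $\limsup$ set, $M_\gamma=\limsup_k M(k,\alpha)$, and to apply the Borel-Cantelli lemma: it suffices to prove that $\sum_{k\ge1}{\rm Leb}_\gamma(M(k,\alpha))<\infty$ whenever $\alpha\ge\alpha_0$. To encode the dynamics I would iterate the subdivision already at hand. Starting from $\gamma$ I cut $f(\gamma)$ into $u$-segments of length in $[L,2L]$, cut the image of each such segment again, and so on; at step $k$ this produces a partition (mod zero) of $f^k(\gamma)$ into pieces $\gamma^{(k)}_\omega$ indexed by words $\omega$ of length $k$, whose ancestry records for each $0\le j\le k-1$ the level-$j$ piece visited at time $j$. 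Writing $b_j(x)=1$ when that level-$j$ piece meets $V$ and $b_j(x)=0$ otherwise, and $\tilde S_k=\sum_{j=0}^{k-1} b_j$, one has $\mathbf 1_V(f^j(x))\le b_j(x)$, so $M(k,\alpha)\subseteq\{\tilde S_k\ge\alpha k\}$ and it is enough to estimate the latter.

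The core is a large-deviation bound for $\tilde S_k$. On the level-$k$ words I put the relative-length vector $q^{(k)}(\omega)={\rm length}(\gamma^{(k)}_\omega)/{\rm length}(f^k(\gamma))$, a probability vector which, by the uniformly bounded distortion of the family along unstable leaves, agrees with the pushforward of normalized ${\rm Leb}_\gamma$ under $f^k$ up to a fixed factor $C_0$. The one-step hypothesis $\sum_{j\in I_V}{\rm length}(\gamma_j)\le\tau_0\,{\rm length}(f(\gamma))$, applied to the children of a given level-$(j-1)$ piece and transported forward by $f^{k-j}$ using the same distortion bound, shows that the $q^{(k)}$-conditional probability of $\{b_j=1\}$, given the level-$(j-1)$ ancestor, is at most $C_0\tau_0$. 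A Chernoff estimate then gives $\mathbb E_{q^{(k)}}[e^{s\tilde S_k}]\le(1+(e^s-1)C_0\tau_0)^k$ for every $s>0$, so $q^{(k)}(\tilde S_k\ge\alpha k)\le\rho^k$ with $\rho=\inf_{s>0}e^{-s\alpha}(1+(e^s-1)C_0\tau_0)<1$ as soon as $\alpha>C_0\tau_0$.

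To conclude I pull the bad pieces back by $f^{-k}$ and use the global distortion bound once more, obtaining ${\rm Leb}_\gamma(M(k,\alpha))\le C_0\,\rho^k\,{\rm Leb}_\gamma(\gamma)$. This is summable in $k$, so Borel-Cantelli yields ${\rm Leb}_\gamma(M_\gamma)=0$. I would then define $\alpha_0$ to be any number in $(C_0\tau_0,1)$, so that $\alpha\ge\alpha_0$ forces $\rho<1$; this is exactly the source of the dependence $\alpha_0=\alpha_0(\tau_0,|Df|E^u|)$ in the statement, since the distortion constant $C_0$ is controlled by the expansion along $E^u$. For the interval $(C_0\tau_0,1)$ to be nonempty one needs $C_0\tau_0<1$, which I would secure by appealing to the freedom, noted in the construction, to redefine $V$ so as to make $\tau_0$ small while $C_0$ stays fixed.

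The step I expect to be the main obstacle is the passage from the length hypothesis on pieces of $f(\gamma)$ to a conditional-probability bound at level $k$: the per-step fraction-in-$V$ estimate has to be carried forward through a non-uniformly expanding map, and one must ensure that only a single, uniformly bounded distortion factor is spent at each conditional step rather than an unbounded product of one-step expansion ratios. It is precisely this bookkeeping that fixes the admissible threshold at $C_0\tau_0$ instead of the naive $\tau_0$, and that makes the smallness of $\tau_0$ (equivalently, the size of $V$ relative to the expansion) the crucial quantitative input.
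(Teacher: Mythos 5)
Your overall architecture --- writing $M_\gamma=\limsup_k M(k,\alpha)$, building the iterated subdivision of $f^k(\gamma)$ into pieces of length in $[L,2L]$, majorizing $\mathbf{1}_V(f^j(x))$ by the indicator $b_j$ that the level-$j$ piece meets $V$, proving an exponential large-deviation bound, and finishing with Borel--Cantelli --- is exactly the structure of the argument the paper points to (it gives no proof of its own, deferring to \cite{BV2000}, Section 6.3). But your implementation of the distortion bookkeeping breaks down at precisely the step you flagged as the main obstacle, and the mechanism you propose does not repair it. You take as reference measure the relative-length vector $q^{(k)}$ on level-$k$ pieces and claim (i) that $q^{(k)}$ agrees up to a uniform factor $C_0$ with $f^k_*$ of normalized ${\rm Leb}_\gamma$, and (ii) that the one-step estimate can be ``transported forward by $f^{k-j}$ using the same distortion bound.'' Both claims require the distortion of $f^{m}$ on a segment of \emph{fixed bounded length} to be bounded uniformly in $m$, for \emph{forward} iterates. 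That is false. Bounded distortion along unstable leaves holds for inverse branches, i.e.\ for $f^{m}$ restricted to a segment whose \emph{image} has bounded length, because then the intermediate images contract geometrically going backwards and the H\"older sums converge. For forward iterates of a fixed segment the images become long, each further iterate contributes a term comparable to the global oscillation of $\log|Df|E^u|$, and the distortion grows exponentially in $m$ unless $|Df|E^u|$ is constant --- which is not available here, since $|Df|E^u|$ is only pinched between $3$ and an upper bound for the diffeomorphisms in $\mathcal{U}$. Concretely, the density of $f^k_*({\rm Leb}_\gamma)$ with respect to arc length on $f^k(\gamma)$ oscillates across level-$k$ pieces by factors exponential in $k$, so $q^{(k)}$ is not comparable to the pushforward, the conditional bound under $q^{(k)}$ does not follow from the one-step hypothesis, and your final pull-back ${\rm Leb}_\gamma(M(k,\alpha))\le C_0\rho^k\,{\rm Leb}_\gamma(\gamma)$ is unjustified.

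The repair --- and this is how the cited argument runs --- is to keep ${\rm Leb}_\gamma$ as the reference measure throughout and run your Chernoff estimate with respect to the filtration $\mathcal{F}_j$ generated by the \emph{preimages in $\gamma$} of the level-$j$ pieces. If $\gamma'$ is a level-$(j-1)$ piece and $\tilde\gamma'\subseteq\gamma$ its preimage, then $f^{j}$ maps $\tilde\gamma'$ onto $f(\gamma')$, a segment of uniformly bounded length (at most $2L\sup|Df|E^u|$), and the intermediate images $f^{i}(\tilde\gamma')$ have lengths bounded by that constant times $3^{-(j-i)}$; hence the distortion of $f^{j}|\tilde\gamma'$ is bounded by a constant $C_0$ \emph{uniform in $j$}. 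Pulling the one-step estimate back through this inverse branch gives
$$
{\rm Leb}_\gamma\bigl(\{b_j=1\}\mid\mathcal{F}_{j-1}\bigr)\leq C_0\tau_0 ,
$$
so a single bounded factor is spent per conditional step, always on an inverse branch and never on a forward image. With this substitution your exponential-moment computation, the threshold $\alpha_0\in(C_0\tau_0,1)$ (with $C_0\tau_0<1$ secured by shrinking $V$), and the Borel--Cantelli conclusion all go through verbatim, and the dependence $\alpha_0=\alpha_0(\tau_0,|Df|E^u|)$ in the statement is recovered.
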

The proof of Lemma~\ref{le:tiempouseg} we can found in \cite{BV2000}, section 6.3. We need to remark that $\alpha _0$ is a constant such that it essentially depends on the distortion bound along the unstable direction. That allow us to fix $\alpha>\alpha_0$ and to choose $\beta$ close to zero such that 

\begin{equation}\label{eq:relclave}
3^{(1-\alpha)}(1-\beta)^\alpha=\lambda>1. 
\end{equation}

\begin{prop} Let $f\in \mathcal{U}$ be as above. Then for any $u$-segment $\gamma$ and Lebesgue almost every point  $x\in \gamma$, 
$$\limsup_{n\to\infty}\frac1n\log|(Df^n|E^c_x)^{-1}|<0.$$
In particular $f$ is mostly expanding.
\end{prop}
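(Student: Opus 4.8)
The plan is to use that $E^{c}$ is one\nobreakdash-dimensional to turn the statement into a statement about a Birkhoff average, and then to split that average according to the visits of the orbit to the bad region $V$. Since $\dim E^c=1$, for every $x$ and $n$ the operator $Df^n|E^c_x$ is (up to sign) multiplication by the scalar $|Df^n|E^c_x|$, so
$$\frac1n\log|(Df^n|E^c_x)^{-1}|=-\frac1n\log|Df^n|E^c_x|=-\frac1n\sum_{j=0}^{n-1}\log|Df|E^c_{f^j(x)}|.$$
Hence the desired inequality $\limsup_n\frac1n\log|(Df^n|E^c_x)^{-1}|<0$ is equivalent to $\liminf_n\frac1n\sum_{j=0}^{n-1}\log|Df|E^c_{f^j(x)}|>0$, and it suffices to bound this average below by a fixed positive constant for Lebesgue almost every $x\in\gamma$.

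First I would record the pointwise dichotomy supplied by \eqref{eq:expcen} and \eqref{eq:DAcenter}: at iterates with $f^j(x)\notin V$ one has $\log|Df|E^c_{f^j(x)}|\ge\log\eta_c>0$, while at iterates with $f^j(x)\in V$ one still has $\log|Df|E^c_{f^j(x)}|>\log(1-\beta)$, a quantity close to $0$ (possibly negative). Writing $\theta_n(x)=\frac1n\#\{0\le j<n:\ f^j(x)\in V\}$ for the fraction of time spent in $V$ up to time $n$, summing these two bounds gives
$$\frac1n\sum_{j=0}^{n-1}\log|Df|E^c_{f^j(x)}|\ \ge\ (1-\theta_n(x))\log\eta_c+\theta_n(x)\log(1-\beta).$$
The right\nobreakdash-hand side is an affine function of $\theta_n(x)$ with slope $\log\frac{1-\beta}{\eta_c}<0$, hence strictly decreasing, so it is smallest when $\theta_n(x)$ is largest. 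By the choice of $\alpha$ and $\beta$ (cf.\ \eqref{eq:relclave}), the weighted geometric mean $\eta_c^{\,1-\alpha}(1-\beta)^{\alpha}$ exceeds $1$; call its logarithm $\log\lambda>0$. Then the value of the affine function at $\theta_n(x)=\alpha$ is exactly $\log\lambda$, so whenever $\theta_n(x)<\alpha$ the average above exceeds $\log\lambda$.

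It remains to guarantee that $\theta_n(x)<\alpha$ for all large $n$, for almost every $x$, and this is exactly where Lemma~\ref{le:tiempouseg} enters. Fixing $\alpha\in(\alpha_0,1)$ as in \eqref{eq:relclave}, the set $M_\gamma=\bigcap_{n}\bigcup_{k\ge n}M(k,\alpha)=\{x\in\gamma:\ \limsup_n\theta_n(x)\ge\alpha\}$ has zero Lebesgue measure in $\gamma$. For every $x$ in the full\nobreakdash-measure complement we have $\limsup_n\theta_n(x)<\alpha$, so $\theta_n(x)<\alpha$ for all $n$ beyond some $N(x)$; combining with the previous paragraph yields $\frac1n\sum_{j=0}^{n-1}\log|Df|E^c_{f^j(x)}|\ge\log\lambda$ for all $n\ge N(x)$, and taking $\liminf$ gives $\liminf_n\frac1n\log|Df^n|E^c_x|\ge\log\lambda>0$. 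This is the claimed inequality. Finally, since it holds on a full\nobreakdash-measure (hence positive\nobreakdash-measure) subset of \emph{every} $u$\nobreakdash-disk $\gamma$, and since $m(Df^n|E^c)=|Df^n|E^c|$ in the one\nobreakdash-dimensional central bundle, the diffeomorphism $f$ is strongly mostly expanding in the sense of Definition~\ref{defi:strongmostlyexpanding}; Proposition~\ref{prop:defequivalente1} then gives that $f$ is mostly expanding.

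I expect the genuinely nontrivial input to be Lemma~\ref{le:tiempouseg}, namely the control of the asymptotic fraction of time a typical point of a $u$\nobreakdash-segment spends inside $V$; this rests on the bounded\nobreakdash-distortion estimates along unstable leaves together with the fact (arranged by taking $L$ comparable to the size of $V$) that the $f$\nobreakdash-image of any sufficiently long $u$\nobreakdash-segment spends at least a definite fraction $1-\tau_0$ of its length outside $V$, and it is the piece borrowed from \cite{BV2000}. Once that measure\nobreakdash-theoretic fact is in hand the remaining steps are routine. A secondary point requiring care is uniformity in $\gamma$: the constants $\eta_c,\beta,\lambda,\alpha$ must not depend on the chosen $u$\nobreakdash-segment, which is precisely what allows the conclusion to hold for every $u$\nobreakdash-disk simultaneously and hence to deliver the strong mostly expanding property.
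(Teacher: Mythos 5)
Your proof is correct and follows essentially the same route as the paper: write the central derivative as a Birkhoff product, bound it below by $1-\beta$ at iterates inside $V$ and by the uniform central expansion at iterates outside $V$, invoke Lemma~\ref{le:tiempouseg} to ensure that Lebesgue almost every point of every $u$-segment spends an asymptotic fraction of time less than $\alpha$ in $V$, and conclude positivity of the central exponent via the relation \eqref{eq:relclave}, deducing the mostly expanding property through Proposition~\ref{prop:defequivalente1}. The only difference is minor but in your favor: you use the lower bound $\eta_c>1$ from \eqref{eq:expcen} for the expansion outside $V$, whereas the paper uses the constant $3$ (citing \eqref{eq:DAunif}), which is inconsistent with its own upper bound $|Df|E^c|<3$ in \eqref{eq:expcen}; your choice makes the constant in \eqref{eq:relclave} the correct one.
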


\begin{proof} Let $\gamma$ be a $u$-segment and consider $x\in\gamma$. Since $E^c$ is a one dimensional subbundle,

$$|Df^n(x)|E^c(x)|=\prod_{j=0}^{n-1}|Df(f^j(x))|E^c(f^j(x))|.$$
If we denote by $J_V(x)=\{j\::\: f^j(x)\in V, 0\leq j\leq n-1\}$ the iterates of $x$ belonging to $V$ then,  by \eqref{eq:DAcenter}, the derivative of $f^j(x)$ along the center direction is controlled if $j\in J_V(x)$ by
$$|Df(f^j(x))|E^c(f^j(x))|>1-\beta.$$
For the iterated $j\notin J_V(x)$, by \eqref{eq:DAunif}, we have that
$$|Df(f^j(x))|E^c(f^j(x))|>3.$$
Then, 
$$|Df^n(x)|E^c(x)|>3^{n-|J_V(x)|}(1-\beta)^{|J_V(x)|}.$$
Follows from the Lemma~\ref{le:tiempouseg}  that for Lebesgue almost every $x\in\gamma\setminus M_\gamma$, that is, there exist  $N\geq 1$, such that for every $n\geq N$ we have
$$\frac{|J_V(x)|}{n}\leq \alpha<1.$$
Then, taking in account \eqref{eq:relclave}, we have that
\begin{eqnarray*}
|Df^n(x)|E^c(x)|&>&3^{n-|J_V(x)|}(1-\beta)^{|J_V(x)|}\\
&>&3^{(1-\alpha) n}(1-\beta)^{\alpha n}\\
&>&\lambda^n>1.
\end{eqnarray*}

Finally,
$$\frac{1}{n}\log|Df^n(x)|E^c(x)|>\log\lambda>0.$$

and then, we obtain
$$\liminf_{n\to\infty}\frac{1}{n}\log|Df^n(x)|E^c(x)|>\log\lambda>0.$$

\end{proof}

\noindent \emph{Remark:} The previous example can be generalized to $\mathbb{T}^{n+2}$, $n\geq 1$, beginning from a linear Anosov difeomorphism $f_0$ with decomposition 
$$T\mathbb{T}^3=E^s_0\oplus E^c_0\oplus E^u_0,$$
where $\dim E^u_0=\dim E^c_0=1$ and $\dim E^s_0= n$.

\vspace{1 cm}

\noindent {\bf Acknowledgment:} We would like to thank for the referee for the helpful suggestions which we have incorporated into this version.

\bibliographystyle{plain}

\bibliography{MACABIB}

\end{document}